\renewcommand{\eqref}[1]{%
  \hyperref[#1]{\textup{\tagform@{\ref*{#1}}}}%
}
\theoremstyle{plain}
\newtheorem{thm}{Theorem}[section]
\newtheorem{lem}[thm]{Lemma}
\newtheorem{cl}[thm]{Claim}
\newtheorem{prop}[thm]{Proposition}
\newtheorem{cor}[thm]{Corollary}
\theoremstyle{definition}
\newtheorem{definition}[thm]{Definition}
\newtheorem{exmpl}[thm]{Example}
\newtheorem{rem}[thm]{Remark}
\newcommand{\D}{\textnormal{D}}
\newcommand{\R}{\mathbb{R}}
\renewcommand*\d{\mathop{}\!\mathrm{d}}
\renewcommand*\D{\mathop{}\!\mathrm{D}}
\newcommand{\grad}{\nabla}
\newcommand{\tr}{\textnormal{tr}}
\newcommand{\hidethis}[1]{}
\newcommand{\vol}{\textnormal{vol}}
\newcommand{\wasser}{\mathscr{W}_{2}}
\newcommand{\riem}{\operatorname{Riem}}
\newcommand{\ric}{\operatorname{Ric}}
\newcommand{\met}{\mathrm g}
\newcommand{\man}{M}
\newcommand{\Tr}{\operatorname{Tr}}
\newcommand{\pheat}{\mathrm P}
\newcommand{\ot}{\theta}
\newcommand{\Id}{\operatorname{Id}}
\newcommand{\PR}{\mathcal P^{\textnormal{ac}}_c(\man)}
\newcommand{\Sec}{\operatorname{sec}}
\newcommand{\geo}{\gamma}
\newcommand{\Jf}{\mathbf{J}}
\newcommand{\ETP}{\mathbf {U}}
\newcommand{\ET}{ \mathbf H}
\newcommand{\Rs}{\mathbf R}
\newcommand{\Matrix}{\mathbf A}
\newcommand{\Ot}{\mathrm F}
\newcommand{\cut}{\operatorname{cut}}
\newcommand{\hess}{\operatorname{Hess}}
\newcommand{\dom}{\operatorname{\mathrm D}}
\newcommand{\dd}{n}
\begin{document}
\title{Sectional curvature and matrix displacement convexity}
\author{Gautam Aishwarya}
\address{Michigan State University, Department of Mathematics, East Lansing, MI 48824, USA.}
\email{aishwary@msu.edu}
\author{Liran Rotem}
\address{Technion -- Israel Institute of Technology, Faculty of Mathematics, Technion City, Haifa 3200003, Israel.}
\email{lrotem@technion.ac.il}
\author{Yair Shenfeld}
\address{Brown University, Division of Applied Mathematics, Providence, RI 02906, USA.}
\email{yair\_shenfeld@brown.edu}
\begin{abstract}
We show that the sectional curvature of a Riemannian manifold is nonnegative  if, and only if, the entropy functional  is matrix displacement convex.  As an application we obtain intrinsic dimensional evolution variational  inequalities, and the corresponding Wasserstein contraction along heat flows, which improve on their  dimensional  counterparts. 
\end{abstract}
\maketitle

\section{Introduction and main results}
\label{sec:intro}

\subsection{Background}
One of the profound consequences of optimal transport theory is the connection between the geometry of a space and the behavior of functionals  over the corresponding Wasserstein space. The classical setting is that of a  smooth complete  Riemannian manifold without boundary $(\man,\met)$, with the induced distance $d$ and volume measure  $\vol$. Associated to $\man$ we have the \emph{Wasserstein space} $(\mathcal{P}_2(\man), \wasser)$, where $\mathcal{P}_2(\man)$ is the set of probability measures  on $\man$ with finite second moment, and $ \wasser$ is the Wasserstein distance between probability measures $\mu_0,\mu_1\in \mathcal{P}_2(\man)$,
\begin{equation} \label{eq: w2def_intro}
    \wasser (\mu_{0}, \mu_{1}) := \left(  \inf_{\pi \in \Pi (\mu_{0}, \mu_{1})} \int_{\man} d^2 (x,y)\d \pi (x,y) \right)^{\frac{1}{2}},
\end{equation}
with $\Pi (\mu_{0}, \mu_{1})$ denoting the collection of all couplings between $\mu_{0}$ and $\mu_{1}$. The functional which is of interest to us is the (Boltzmann) \emph{entropy functional}  given by 
\begin{equation} \label{eq: Hfunctional_intro}
    H ( \mu )  := \begin{cases}
\int_{\man}\ \log \left(\frac{\d \mu}{\d \vol} \right)\d \mu, & \textnormal{ if } \mu \ll \vol , \\
+ \infty, & \textnormal{ otherwise. }  \\
\end{cases}
\end{equation}
The insight that convexity properties of $H$ along Wasserstein geodesics are related to the geometry of $\man$ goes back to formal computations of Otto and Villani  \cite{OttoVillani00}, which showed that if the Ricci curvature of $\man$ is nonnegative, then $t\mapsto H(\mu_t)$ is convex whenever $(\mu_t)_{t\in [0,1]}$ is a Wasserstein geodesic. This result was eventually proven rigorously by Cordero-Erausquin, McCann, and Schmuckenschl\"{a}ger \cite{Cordero-ErasquinMcCannSchmuckenschlager01},  and the converse implication, i.e., that convexity of $H$ along Wasserstein geodesics implies nonnegative Ricci curvature, was established by von-Renesse and Sturm \cite{vonRenesseSturm05}.  Following the   terminology in the seminal work of McCann \cite{McCann97}, we will say that $H$ is \emph{displacement convex} if $[0,1]\ni t\mapsto H(\mu_t)$ is convex for any geodesic $(\mu_t)_{t\in [0,1]}$ in $(\mathcal{P}_{2}(\man), \wasser)$ whenever $\mu_0 \ll \vol$.
\begin{thm}[\cite{OttoVillani00, Cordero-ErasquinMcCannSchmuckenschlager01, vonRenesseSturm05}]
\label{thm:Ricci_H_cvx}
Let $(\man, \met)$ be a smooth complete Riemannian manifold without boundary. Then, the following are equivalent.
\begin{enumerate}[(i)]
\item $\man$ has nonnegative Ricci curvature.
\item $H$ is displacement convex.
\end{enumerate}
\end{thm}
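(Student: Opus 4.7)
The plan is to prove the two implications by the standard optimal-transport toolkit, using Brenier--McCann's structure theorem for Wasserstein geodesics together with Jacobi-field analysis.

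For the implication (i) $\Rightarrow$ (ii), I would first invoke McCann's extension of Brenier's theorem: since $\mu_0 \ll \vol$, any Wasserstein geodesic $(\mu_t)_{t \in [0,1]}$ admits the representation $\mu_t = (F_t)_\#\mu_0$, where $F_t(x) = \exp_x(-t\grad \varphi(x))$ for some $d^2/2$-concave potential $\varphi$, and the paths $t \mapsto F_t(x)$ are minimizing geodesics for $\mu_0$-a.e.\ $x$. Writing $\mu_t = \rho_t \d\vol$ and using the change-of-variables formula yields
\begin{equation*}
H(\mu_t) = \int_\man \log \rho_0(x) \d\mu_0(x) - \int_\man \log \jf_t(x) \d \mu_0(x),
\end{equation*}
where $\jf_t(x) = \det \D F_t(x)$ is the Jacobian determinant (measured with respect to the Riemannian volume). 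Thus convexity of $t \mapsto H(\mu_t)$ reduces to concavity of $t \mapsto \log \jf_t(x)$ for each $x$. Along the geodesic $t \mapsto F_t(x)$, the differential $\D F_t$ is governed by a matrix Jacobi equation $\Matrix'' + \Rs \Matrix = 0$, where $\Rs$ encodes sectional curvatures in the directions spanning $T_{F_t(x)}\man$. Taking the determinant and differentiating twice leads, via a Riccati reformulation, to
\begin{equation*}
(\log \jf_t)'' = -\Tr(\mathrm B^2) - \ric(\dot\geo_t, \dot\geo_t),
\end{equation*}
where $\mathrm B = \Matrix' \Matrix^{-1}$ is symmetric by McCann's theorem. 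Nonnegative Ricci curvature thus forces $(\log \jf_t)'' \le 0$, proving displacement convexity of $H$.

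For the converse (ii) $\Rightarrow$ (i), I would argue by contrapositive: assume $\ric_p(v,v) < 0$ for some unit $v \in T_p \man$, and construct a family of probability measures $\mu_0^\epsilon$ supported in a small ball around a point $q_0 = \exp_p(-\tfrac12 v)$ that are transported via the gradient of $\varphi(x) = \tfrac12 d^2(x, q_1)$ where $q_1 = \exp_p(\tfrac12 v)$, scaled to produce a Wasserstein geodesic concentrating near the geodesic segment through $p$ in direction $v$. A careful second-order expansion of $\log \jf_t$, using the Jacobi-field identity above localized at $p$, shows that the second derivative of $H(\mu_t^\epsilon)$ at $t = \tfrac12$ is strictly positive for sufficiently small $\epsilon$, violating displacement convexity.

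The main obstacle lies in the first implication: verifying that $\mathrm B = \Matrix' \Matrix^{-1}$ is symmetric along the transport (so that $\Tr(\mathrm B^2) \ge 0$) and that $\Matrix(t)$ remains nonsingular on the relevant range, which requires the $c$-concavity of $\varphi$ and McCann's regularity theory for optimal maps on manifolds. The converse direction is technically delicate in the measure-construction step, but conceptually reduces to a pointwise check once the asymptotic expansion of the Jacobian is written out. In both directions the geometric content is carried entirely by the interplay between the Jacobi equation and the Ricci curvature term $\Tr(\Rs)$.
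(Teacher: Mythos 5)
Your proposal is correct and follows the standard argument that the paper itself relies on: the paper does not reprove Theorem \ref{thm:Ricci_H_cvx} (it cites \cite{OttoVillani00, Cordero-ErasquinMcCannSchmuckenschlager01, vonRenesseSturm05}), but the machinery it develops --- the change-of-variables identity \eqref{eq:cov_entropy_log}, the Riccati equation \eqref{eq:Us_jacobi} and its trace \eqref{eq:U_ode_tr}, and the converse via a potential with prescribed gradient and vanishing Hessian at a point --- is exactly the trace version of what you sketch. You also correctly flag the genuinely delicate points (a.e.\ symmetry of $\dot{\Jf}_t\Jf_t^{-1}$, nonsingularity of $\Jf_t$ on $[0,1)$, and the localization in the converse), which is where the rigorous work of \cite{Cordero-ErasquinMcCannSchmuckenschlager01, vonRenesseSturm05} lives.
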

The equivalence stated in Theorem \ref{thm:Ricci_H_cvx} has had a tremendous impact beyond its intrinsic interest,  both in the study of functional inequalities, and in the development of Ricci curvature notions in non-smooth spaces \cite{Villani09}. From the perspective of Riemannian geometry, however, Ricci curvature is only one out of a number of fundamental curvature notions. In this work we are interested in the following question:

\begin{center}
\emph{Is there an analogue of Theorem \ref{thm:Ricci_H_cvx} for sectional curvature?}
\end{center}
To this date only a few works have addressed this question.  Kim and Pass \cite{MR4069211} characterized \emph{nonpositive} sectional curvature in terms of the the displacement convexity of the \emph{variance functional}. Closer to the question we address here,  Ketterer and Mondino \cite{KettererMondino18} showed that sectional curvature bounds, from below and above, are characterized by the behavior of the appropriate entropy functionals along optimal transport geodesics between \emph{lower-dimensional} probability  measures. In this work we provide an alternative answer by appealing to the notion of \emph{matrix displacement convexity} for the entropy functional.  In contrast to \cite{KettererMondino18} (which is however more general providing upper and lower bounds), our answer will not need to resort to lower-dimensional probability measures, and arguably is a more direct analogue of Theorem  \ref{thm:Ricci_H_cvx}. In addition, just as Theorem  \ref{thm:Ricci_H_cvx} and its dimensional improvements (cf. Section \ref{subsec:sec_mat_funinq_intro}) yield (dimensional) functional inequalities, our characterization of nonnegative sectional curvature will yield \emph{intrinsic dimensional} functional inequalities.

\subsection{Matrix displacement convexity}
\label{subsec:matrix_disp_cvx_intro}
Our optimal transport characterization of nonnegative sectional curvature  relies on the notion of \emph{matrix displacement convexity}, which was developed by the third-named author  in the Euclidean setting \cite{Shenfeld24}. While the connection between matrix displacement convexity and sectional curvature was already hinted at in \cite{Shenfeld24}, it remained unclear. As it turned out, the Riemannian setting requires a more refined construction (see Remarks \ref{rem:matrix_disp_cvx_euclid_def} and Remark \ref{rem:better}), which is nonetheless  formulated using only standard notions of Riemannian geometry and optimal transport. To keep the discussion brief we defer precise definitions to Section \ref{sec:proofMain}. 

Let $(\mu_t)_{t\in [0,1]}$ be an optimal transport geodesic in  $(\PR, \wasser)$
where
\[
\PR:=\{\mu\in \mathcal{P}(\man): \mu \ll \vol\textnormal{ and $\mu$ is compactly supported}\}.
\]
It follows from the Brenier--McCann theorem that the optimal transport geodesics are of the form
\begin{equation}
\label{eq:BMthm_intro}
\mu_t= \left[ \exp (t \grad \ot) \right]_{\#} \mu_{0},\qquad\textnormal{for all} \qquad t\in [0,1],
\end{equation} 
where $-\ot:\man \to \R \cup \{ -\infty \}$ is a $\frac{d^{2}}{2}$-concave function, and where $\exp$ is the exponential map on $\man$. The map $\exp (\grad \ot)$ induces a family of Jacobi field matrices $(\Jf_s)_{s\in [0,1]}$ (cf. Section \ref{subsubsec:Jacobi}), and the associated matrices
\begin{equation}
\label{eq:ent_prod_tensor_def_intro}
\ETP_s(x):=\dot{\Jf}_s(x)\Jf_s^{-1}(x),
\end{equation}
where $\dot{\Jf}_s$ stands for the first derivative of $\Jf_s$ in $s$. We define  the (time-dependent) \emph{entropy tensor} associated to $(\mu_0,\mu_1)$ as
\begin{equation}
\label{eq:pointwise_ent_prod_tens_lagrange_intro}
\ET_t^{\mu_0\to\mu_1}(x):=-\int_0^t \ETP_s (x)\d s,\qquad\quad x\in \dom(\mu_0,\mu_1), 
\end{equation}
where $\dom(\mu_0,\mu_1)$ is a set of full $\mu_0$-measure. The idea behind the definition \eqref{eq:pointwise_ent_prod_tens_lagrange_intro} is that the integral of the trace of  $\ET_t^{\mu_0\to\mu_1}$ is essentially the entropy functional $H$. Indeed, by the Jacobi formula,
\begin{equation}
\label{eq:jacobi_formula_intro}
\begin{split}
\frac{\d}{\d t}\log \det (\Jf_t(x))&=\Tr\left[\dot{\Jf}_t(x)\Jf_t(x)^{-1}\right]\overset{\eqref{eq:ent_prod_tensor_def_intro}}{=}\Tr[\ETP_t(x)]\overset{\eqref{eq:pointwise_ent_prod_tens_lagrange_intro} }{=}-\frac{\d}{\d t}\Tr[\ET_t^{\mu_0\to\mu_1}(x)],
\end{split}
\end{equation}
so using the change of variables formula
\begin{equation}
\label{eq:cov_entropy_log_intro}
H(\mu_0)=H(\mu_t)+\int_{\man}\log \det (\Jf_t(x))\d\mu_0(x),
\end{equation}
together with \eqref{eq:jacobi_formula_intro}, gives
\begin{equation}
\label{eq:tensor_ent_trace_intro}
H(\mu_t)=H(\mu_0)+\int_{\man}\Tr[\ET_t^{\mu_0\to\mu_1}(x)]\d\mu_0(x).
\end{equation}
(We refer to Lemma \ref{lem:tr_ent_mat_is_ent} for more details.)
Hence, the concept of the entropy tensor allows us to ``decompose" the contribution to the entropy $H(\mu_t)$ coming from different directions in space, which will lead to our intrinsic dimensional inequalities in Section \ref{subsubsec:intrinsic_dim_funq_inq_intro}. Let us denote the collection of  entropy tensors as 
\begin{equation}
\label{eq:H_gen_def_intro}
\ET:=\{(\ET_t^{\mu_0\to\mu_1}(x))_{t\in [0,1],x\in\dom(\mu_0,\mu_1)}\}_{\mu_0,\mu_1\in \PR}.
\end{equation}
\begin{definition}[Matrix displacement convexity]
\label{def:matrix_disp_cvx_intro}
The entropy tensor $\ET$  is \emph{matrix displacement convex} if for every $\mu_0,\mu_1\in \PR$, we have, for all $t\in [0,1]$ and $x\in \dom(\mu_0,\mu_1)$, 
\begin{equation}
\label{eq:matrix_disp_cvx_def__2nd_der_intro}
\ddot{\ET}_{t}^{\mu_0\to\mu_1}(x) \succeq 0,
\end{equation}
where $\succeq$ is the Loewner order, and where $\ddot{\ET}_{t}^{\mu_0\to\mu_1}(x)$ stands for the second derivative of $\ddot{\ET}_{t}^{\mu_0\to\mu_1}(x)$ in $t$.
\end{definition}

We will elaborate on the definition of matrix displacement convexity in the following section, but to give a quick intuition, note that if $\ET$ is matrix displacement convex then
\begin{equation}
\label{eq:_matx_cx_implies_cvx_intro}
\ddot{H}(\mu_t)\overset{\eqref{eq:tensor_ent_trace_intro}}{=}\int_{\man}\Tr\left[\ddot{\ET}_t^{\mu_0\to\mu_1}(x)\right]\d\mu_0(x)\ge 0.
\end{equation}
In words, matrix displacement convexity of the entropy tensor $\ET$ implies the displacement convexity of the entropy functional $H$. Equation \eqref{eq:_matx_cx_implies_cvx_intro} already hints at the connection between matrix displacement convexity and sectional curvature. As we saw above,
\begin{equation}
\label{eq:Ricci_H_intro}
\Tr[\riem]=\ric\ge 0\qquad \overset{\textnormal{Theorem \ref{thm:Ricci_H_cvx} }}{\Longleftrightarrow}\qquad\ddot{H}(\mu_t)\ge 0,
\end{equation}
and below will show that 
\begin{equation}
\label{eq:Riem_H_intro}
\riem\succeq 0\qquad \overset{\textnormal{Theorem \ref{thm: mainsecequiv_intro} }}{\Longleftrightarrow}\qquad  \ddot{\ET}_{t}^{\mu_0\to\mu_1}(x)\succeq 0.
\end{equation}
Thus, the relation \eqref{eq:tensor_ent_trace_intro} between the integral of the entropy tensors and the entropy functional mirrors the relation between the Riemann curvature tensor and the Ricci curvature tensor.

\begin{rem}
\label{rem:euler_formulation}
In the Eulerian picture of optimal transport the Wasserstein geodesic $(\mu_t)_{t\in [0,1]}$ evolves according to a continuity equation 
\begin{equation}
\label{eq:cont_eq_intro}
\partial_t\mu_t(x)+\grad \cdot(\mu_t(x)\grad\ot_t(x))=0,\qquad  \textnormal{for}\quad t\in (0,1)\quad\textnormal{and} \quad x\in\man,
\end{equation}
where $\ot_t(x)$ satisfies the Hamilton--Jacobi equation, 
\begin{equation}
\label{eq:HJ_intro}
\partial_t\ot_t(x)+\frac{|\nabla\ot_t(x)|^2}{2}=0,
\end{equation}
with $\ot_0=\ot$ the optimal transport map, $\exp(\grad\ot)_{\sharp}\mu_0=\mu_1$. The usual correspondence between the Eulerian and Lagrangian pictures \cite[Eq. (14.25)]{Villani09} shows that, up to identification of tangent spaces via parallel transport,
\begin{equation}
\label{eq:pointwise_ent_Euler_lagrange}
\ETP_s(x)=\grad^2\ot_s(\exp_x (s \grad \ot(x))),
\end{equation} 
so we may write
\begin{equation}
\label{eq:pointwise_ent_prod_tens_euler_intro}
\ET_t^{\mu_0\to\mu_1}(x):=-\int_0^t\grad^2\ot_s(\exp_x (s \grad \ot(x)))\d s. 
\end{equation}
However, true to the slogan “Think Eulerian, prove Lagrangian" \cite[p. 444]{Villani09}, while the Eulerian formulation is cleaner and more conceptual, it is less useful than the Lagrangian formulation for the purpose of rigorous proofs. Thus, we will stick from now on with the Lagrangian perspective. 
\end{rem}

\subsection{Sectional curvature and matrix displacement convexity}
\label{subsec:sec_mat_funinq_intro}
The first main result of our work is the following analogue\footnote{Compared to Theorem \ref{thm:Ricci_H_cvx} we make a compactness assumption on the support of the measures in Theorem \ref{thm: mainsecequiv_intro}; see Remark \ref{rem:compact_assumption}.} of Theorem \ref{thm:Ricci_H_cvx}. 

\begin{thm}
\label{thm: mainsecequiv_intro}
Let $(\man, \met)$ be a smooth complete Riemannian manifold without boundary. Then, the following are equivalent.
\begin{enumerate}[(i)]
\item \label{enum:sec} $\man$ has nonnegative sectional curvature.
\item \label{enum:cvx} $\ET$ is matrix displacement convex. 
\item \label{enum:strong_sec} For any $\mu_0,\mu_1\in \PR$, we have, for $t\in [0,1]$ and $x\in \dom(\mu_0,\mu_1)$, 
\begin{equation}
\label{eq:entropy_Tensor_diff_inq_strong_intro}
 \ddot{\ET}_{t}^{\mu_0\to\mu_1}(x)\succeq [\dot{\ET}_{t}^{\mu_0\to\mu_1}(x)]^2.
\end{equation}
\end{enumerate}
\end{thm}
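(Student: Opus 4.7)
The plan is to reduce the whole theorem to a single pointwise matrix identity tying $\ddot{\ET}$ to the Riemann curvature operator, and then to read off all three implications from it. From $\ET_t^{\mu_0\to\mu_1}(x)=-\int_0^t \ETP_s(x)\d s$ with $\ETP_s=\dot{\Jf}_s\Jf_s^{-1}$, we have $\dot{\ET}_t=-\ETP_t$ and
\begin{equation*}
\ddot{\ET}_t \;=\; -\dot{\ETP}_t \;=\; -\ddot{\Jf}_t\Jf_t^{-1} + \ETP_t^{2}.
\end{equation*}
Combining this with the matrix Jacobi equation $\ddot{\Jf}_t+\Rs_t\Jf_t=0$—where $\Rs_t(x)$ is the symmetric operator $w\mapsto R(w,\dot{\geo}_t)\dot{\geo}_t$ along the geodesic $\geo(s)=\exp_x(s\grad\ot(x))$, identified via parallel transport—yields the master identity
\begin{equation*}
\ddot{\ET}_t^{\mu_0\to\mu_1}(x) \;=\; \Rs_t(x) \;+\; \bigl[\dot{\ET}_t^{\mu_0\to\mu_1}(x)\bigr]^{2}.
\end{equation*}

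With this identity in hand, the chain (i)$\Rightarrow$(iii)$\Rightarrow$(ii) is immediate. Nonnegative sectional curvature is equivalent to $\Rs_t(x)\succeq 0$ at every base point and in every direction, so the identity instantly gives (iii); and (iii)$\Rightarrow$(ii) is trivial since $[\dot{\ET}_t]^{2}\succeq 0$. The substantive content is the converse (ii)$\Rightarrow$(i).

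For (ii)$\Rightarrow$(i), the master identity recasts matrix displacement convexity at a point $(t,x)$ as $\Rs_t(x)\succeq-[\ETP_t(x)]^{2}$, so to extract a pointwise curvature bound I must kill the quadratic correction. At $t=0$ this is precisely the condition $\grad^{2}\ot(x_0)=0$. The plan is therefore: given $x_0\in\man$ and $v\in T_{x_0}\man$, construct an optimal transport pair $(\mu_0,\mu_1)\in\PR$ whose Kantorovich potential $\ot$ satisfies $\grad\ot(x_0)=v$ and $\grad^{2}\ot(x_0)=0$. Matrix displacement convexity at $(0,x_0)$ then gives $\Rs_0(x_0)\succeq 0$, i.e., $\langle R(w,v)v,w\rangle\ge 0$ for all $w\in T_{x_0}\man$; letting $v$ range over $T_{x_0}\man$ and $x_0$ over $\man$ recovers nonnegative sectional curvature throughout.

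The main obstacle is this construction: $\ot$ must be a bona-fide $\frac{d^{2}}{2}$-concave function, not merely a smooth function with a prescribed $2$-jet. I expect to realize it by perturbing, on a small ball around $x_0$ in normal coordinates, an affine function of the form $y\mapsto\langle v,\Log_{x_0}y\rangle$ by a compactly supported correction tailored so that the $2$-jet at $x_0$ matches and the Hessian bound required for $c$-concavity is preserved on a small enough scale. Taking $\mu_0$ to be a smooth compactly supported density around $x_0$ and $\mu_1:=(\exp(\grad\ot))_{\#}\mu_0$ then gives a genuine optimal transport by Brenier--McCann; since \eqref{eq:matrix_disp_cvx_def__2nd_der_intro} holds on $\dom(\mu_0,\mu_1)$ (a set of full $\mu_0$-measure) and both sides of the master identity are continuous in $x$ near $x_0$, the inequality extends to $x_0$ by approximation, completing the argument.
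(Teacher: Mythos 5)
Your proposal is correct and follows essentially the same route as the paper: the ``master identity'' $\ddot{\ET}_t=\Rs_t+[\dot{\ET}_t]^2$ is exactly the paper's computation via \eqref{eq:U_ODE_ot}, and the converse direction is handled in the paper just as you propose, by building a compactly supported potential with $\grad\ot(x_0)=u$, $\grad^2\ot(x_0)=0$ (the paper makes it $\tfrac{d^2}{2}$-concave by a small rescaling, per \cite[Theorem 13.5]{Villani09}, and notes $\dom(\mu_0,\mu_1)=\man$ by smoothness, so no approximation at $x_0$ is even needed).
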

The equivalence \eqref{enum:sec}$\Leftrightarrow$\eqref{enum:cvx} is the tensorial analogue of  Theorem \ref{thm:Ricci_H_cvx}. The stronger convexity of $\ET$, namely \eqref{eq:entropy_Tensor_diff_inq_strong_intro}, is the tensorial analogue of the following classical result of Erbar, Kuwada, and Sturm \cite{ErbarKuwadaSturm15}.
\begin{thm}[\cite{ErbarKuwadaSturm15}]
\label{thm:EKS_intro}
Let $(\man, \met)$ be a smooth complete $n$-dimensional Riemannian manifold without boundary. Then, the following are equivalent.
\begin{enumerate}[(i)]
\item $\man$ has nonnegative Ricci curvature.
\item $\ddot{H}(\mu_t)\ge \frac{[\dot{H}(\mu_t)]^2}{n}$.
\end{enumerate}
\end{thm}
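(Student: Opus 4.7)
The plan is to prove $(i) \Rightarrow (ii)$ by a direct Lagrangian computation along Wasserstein geodesics, combined with two elementary inequalities (Cauchy--Schwarz for matrices and Jensen's inequality), and then prove the converse $(ii) \Rightarrow (i)$ by a localization argument concentrating mass near an arbitrary point.

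For the forward direction, I would fix a Wasserstein geodesic $(\mu_t)_{t\in[0,1]}$ with optimal map $\exp(\nabla\theta)$ and work with the Jacobi field matrix $\Jf_t(x)$ as in \eqref{eq:ent_prod_tensor_def_intro}. Writing $y_t(x):=\log\det \Jf_t(x)$, the Jacobi equation $\ddot \Jf_t=-\Rs_t\, \Jf_t$ (where $\Rs_t$ is the Riemann curvature operator along the geodesic $s\mapsto\exp_x(s\nabla\theta(x))$) yields the matrix Riccati equation $\dot\ETP_t=-\Rs_t-\ETP_t^2$ for $\ETP_t=\dot\Jf_t\Jf_t^{-1}$. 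Taking traces gives
\begin{equation*}
\ddot y_t(x)=-\ric(\gamma'_t,\gamma'_t)-\Tr[\ETP_t(x)^2],
\end{equation*}
and by Cauchy--Schwarz applied to the symmetric $n\times n$ matrix $\ETP_t(x)$ one has $\Tr[\ETP_t(x)^2]\ge \tfrac{1}{n}(\Tr\ETP_t(x))^2=\tfrac{1}{n}\dot y_t(x)^2$. Under $\ric\ge 0$ this forces the scalar inequality $\ddot y_t(x)\le -\tfrac{1}{n}\dot y_t(x)^2$ pointwise. Using the change of variables identity \eqref{eq:cov_entropy_log_intro} in the form $H(\mu_t)=H(\mu_0)-\int_{\man} y_t(x)\, d\mu_0(x)$, I differentiate twice under the integral sign, apply the pointwise inequality, and close the argument with Jensen's inequality $\int \dot y_t^2\, d\mu_0\ge \bigl(\int \dot y_t\, d\mu_0\bigr)^2$ (valid since $\mu_0$ is a probability measure), obtaining $\ddot H(\mu_t)\ge \dot H(\mu_t)^2/n$.

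For the converse $(ii)\Rightarrow(i)$, the strategy is localization: assume toward contradiction that $\ric(v,v)<0$ at some $x_0\in\man$ and some unit $v\in T_{x_0}\man$, and build test measures $\mu_0^\epsilon,\mu_1^\epsilon\in\PR$ concentrated in a ball of radius $\epsilon$ around $x_0$ whose optimal transport map is approximately the translation by $\epsilon v$. Expanding $H(\mu_t^\epsilon)$, $\dot H(\mu_t^\epsilon)$, and $\ddot H(\mu_t^\epsilon)$ to leading order in $\epsilon$ using normal coordinates and the small-time Jacobi expansion $\det \Jf_t=1-\tfrac{t^2}{6}\ric(v,v)\epsilon^2+o(\epsilon^2)$, the dominant contribution to $\ddot H$ comes from $\ric(v,v)$ while $\dot H^2/n$ appears at a strictly higher order, so the hypothesis $\ddot H\ge \dot H^2/n$ forces $\ric(v,v)\ge 0$ at $x_0$, contradicting the assumption. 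Alternatively, one can invoke the equivalence of (ii) with the concavity of $t\mapsto \exp(-H(\mu_t)/n)$, which is a $\mathrm{CD}(0,n)$ condition in the Lott--Sturm--Villani sense, and then invoke the classical Lagrangian characterization of Ricci lower bounds.

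The main obstacle is the converse implication. The forward part is essentially a calculus exercise once the Riccati equation and the change of variables formula are in hand; the only technical subtlety is the usual one of justifying differentiation under the integral and handling the measure-zero set where $\theta$ fails to be twice differentiable. The converse requires constructing test measures with sufficient care that their entropy expansions are tight enough to isolate $\ric(v,v)$ at the correct order in $\epsilon$; one must also verify smoothness of $t\mapsto H(\mu_t^\epsilon)$ near $t=0$, since the differential inequality \eqref{enum:cvx} is only meaningful on the smooth regime. In a fully rigorous treatment this is most efficiently packaged via the reformulation as concavity of $e^{-H/n}$, which converts the problem into a displacement-convexity statement of the type already addressed in \cite{vonRenesseSturm05}.
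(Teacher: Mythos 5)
This theorem is quoted from \cite{ErbarKuwadaSturm15} as background; the paper does not prove it, so there is no in-paper proof to compare against. That said, your forward direction is precisely the mechanism the paper sets up in its preliminaries: the traced Riccati equation \eqref{eq:U_ode_tr}, the Cauchy--Schwarz step $\Tr[\ETP_s^2]\ge \Tr^2[\ETP_s]/n$ giving \eqref{eq:U_ode_inq_tr}, and the identification of $H(\mu_t)-H(\mu_0)$ with $-\int_\man\log\det(\Jf_t)\,\d\mu_0$ (Lemma \ref{lem:tr_ent_mat_is_ent}). Combined with the final Jensen step $\int \dot y_t^2\,\d\mu_0\ge\bigl(\int \dot y_t\,\d\mu_0\bigr)^2$, this is the standard Erbar--Kuwada--Sturm argument, and your signs check out. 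The acknowledged technical debt (Alexandrov second differentiability of $\ot$ only almost everywhere, differentiation under the integral, interpreting $\ddot H$ weakly where $t\mapsto H(\mu_t)$ is merely semiconvex) is real but routine.

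For the converse you are working harder than necessary: since $[\dot H(\mu_t)]^2/n\ge 0$, statement (ii) immediately implies $\ddot H(\mu_t)\ge 0$, i.e.\ displacement convexity of $H$, and Theorem \ref{thm:Ricci_H_cvx} (the von Renesse--Sturm direction, which the paper already records) then yields nonnegative Ricci curvature with no further construction. Your localization sketch essentially re-derives \cite{vonRenesseSturm05}; it can be made to work, but two details need care if you insist on it. First, the second-order coefficient you quote is off: with a potential satisfying $\grad\ot(x_0)=\e v$ and $\hess_{x_0}\ot=0$, the Riccati equation gives $\tfrac{\d^2}{\d t^2}\log\det(\Jf_t)\big|_{t=0}=-\ric(\e v,\e v)$, so $\det \Jf_t=1-\tfrac{t^2}{2}\ric(v,v)\e^2+o(\e^2)$; the factor $\tfrac16$ belongs to the volume expansion of $\det(\d\exp_x)$, a different object (this does not affect the sign, hence not the conclusion). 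Second, one must rescale the test potential so that $-\ot$ is genuinely $\tfrac{d^2}{2}$-concave before invoking the geodesic structure, exactly as in the proof of the implication \eqref{enum: cvx3}$\Rightarrow$\eqref{enum: nonnegativesec} of Theorem \ref{thm:main}. With either route the converse closes, so the proposal is sound in outline.
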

The improvement of Theorem \ref{thm:EKS_intro} over Theorem \ref{thm:Ricci_H_cvx}, i.e., upgrading $\ddot{H}(\mu_t)\ge 0$ to  $\ddot{H}(\mu_t)\ge \frac{[\dot{H}(\mu_t)]^2}{n}$, is due to the explicit assumption that $\man$ is of dimension $n$.  Our inequality $\ddot{\ET}_{t}^{\mu_0\to\mu_1}(x)\succeq [\dot{\ET}_{t}^{\mu_0\to\mu_1}(x)]^2$ in \eqref{eq:entropy_Tensor_diff_inq_strong_intro} is the tensorial analogue of $\ddot{H}(\mu_t)\ge \frac{[\dot{H}(\mu_t)]^2}{n}$. The dimensional information in Theorem \ref{thm: mainsecequiv_intro} is implicit in the definition of matrix displacement convexity, in the sense that  \eqref{eq:matrix_disp_cvx_def__2nd_der_intro} is a statement about $n$ eigenvalues. However, the fact that the dimension $n$ does not appear explicitly in the definition of matrix displacement convexity will turn out to have advantageous consequences for functional inequalities. Indeed, while  Theorem \ref{thm: mainsecequiv_intro} on its own is not very surprising---the connection between Jacobi fields and curvature is classical---the perspective of viewing entropy as the trace of the entropy tensor (defined via the Jacobi field) is what leads to the new intrinsic dimensional functional inequalities in the upcoming section. 

\subsubsection{Intrinsic dimensional functional inequalities} 
\label{subsubsec:intrinsic_dim_funq_inq_intro}
One of the main applications of Theorem \ref{thm:Ricci_H_cvx} is the ability to prove functional inequalities for Riemannian manifolds with nonnegative Ricci curvature \cite{OttoVillani00}. Analogously, Theorem \ref{thm:EKS_intro} allows to prove \emph{dimensional} functional inequalities, where the inequality improves using the dimensional information. In the Euclidean setting, it was shown in \cite{Shenfeld24} how matrix displacement convexity leads to \emph{intrinsic dimensional} functional inequalities (see also the work of Eskenazis and the third-named author \cite{EskenazisShenfeld24}). These type of inequalities are stronger than their dimensional counterparts, capturing different behaviors along different dimensions. Let us demonstrate this line of thinking with the classical evolution variational inequalities (EVI) and Wasserstein contraction along heat flows. Let $(\pheat_t)_{t\ge 0}$ be the heat semigroup on $\man$ generated by  the Laplace--Beltrami operator. The \emph{evolution variational inequality} \cite[Theorem 4.2]{DaneriSavare08} states that, when the Ricci curvature of $\man$ is nonnegative, we have\footnote{To simplify the exposition we use derivatives rather than super-derivatives. We will address this issue in Section \ref{sec:proofApplication}.}
\begin{equation}
\label{eq:EVI_intro}
\frac{\d}{\d \tau}\wasser^{2} (\pheat_{\tau}\mu_{0}, \mu_{1}) \leq 2\,[H(\mu_1)-H(\pheat_{\tau}\mu_{0})].
\end{equation}
The inequality \eqref{eq:EVI_intro} follows from Theorem \ref{thm:Ricci_H_cvx} using the convexity of $H$ along Wasserstein geodesics. Parallel to the relation between Theorem \ref{thm:Ricci_H_cvx} and Theorem \ref{thm:EKS_intro}, the dimensional information on $\man$ can be used to upgrade \eqref{eq:EVI_intro} into the \emph{dimensional evolution variational inequality} \cite[Theorem 2]{ErbarKuwadaSturm15},
\begin{equation}
\label{eq:dim_EVI_intro}
\frac{\d}{\d \tau}\wasser^{2} (\pheat_{\tau}\mu_{0}, \mu_{1}) \leq 2n \left(1-e^{-\frac{1}{n}[H(\mu_1)-H(\pheat_{\tau}\mu_{0})]}\right).
\end{equation}
The inequality \eqref{eq:dim_EVI_intro} improves on \eqref{eq:EVI_intro} since $e^{-x}\ge 1-x$ for $x\ge 0$. Our next result\footnote{In the following results we write $h(\Matrix):=Uh(\Lambda) U^{-1}$ for a symmetric matrix $\Matrix$ with eigen-decomposition $\Matrix=U\Lambda U^{-1}$, and a function $h:\R\to\R$,  where $h(\Lambda) $ is the diagonal matrix obtained by applying $h$ pointwise to the eigenvalues in $\Lambda$.} provides an \emph{intrinsic dimensional} improvement on \eqref{eq:dim_EVI_intro}, and hence also on \eqref{eq:EVI_intro}.

\begin{thm}[Intrinsic dimensional evolution variational  inequality]\label{thm:intrinsic_EVI_intro}
Let $(\man, \met)$ be a smooth compact $n$-dimensional Riemannian manifold without boundary, and let $(\pheat_t)$ be the  heat semigroup on $\man$. Suppose that the sectional curvature of $\man$ is nonnegative. Then, for all $\mu_0,\mu_1\in \PR$, given $T>0$, we have, for almost all $\tau\in (0,T)$,
\begin{equation}
\label{eq:intrinsic_EVI_intro}
\frac{\d}{\d \tau}\wasser^{2} (\pheat_{\tau}\mu_{0}, \mu_{1}) \leq 2n\int_{\man}\left(1 - \frac{\Tr[e^{-\ET_1^{\pheat_{\tau}\mu_0\to\mu_1}(x)}]}{n}\right)\d \pheat_{\tau}\mu_0(x).
\end{equation}
\end{thm}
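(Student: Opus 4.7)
The plan is to reduce the inequality to a pointwise matrix-trace statement and then prove that statement using the strong matrix convexity \eqref{eq:entropy_Tensor_diff_inq_strong_intro}. Fix $\tau$ at which all derivatives exist, set $\rho_\tau := \pheat_\tau \mu_0$, and let $(\mu_t^\tau)_{t \in [0,1]}$ denote the Wasserstein geodesic from $\rho_\tau$ to $\mu_1$ with initial velocity $\grad \ot^\tau$ as in \eqref{eq:BMthm_intro}. Applying the envelope theorem to Kantorovich duality together with integration by parts against $\del_\tau \rho_\tau = \Delta \rho_\tau$ produces, for almost every $\tau$, the super-differential inequality
\[
\tfrac{1}{2}\tfrac{\d}{\d \tau}\wasser^{2}(\rho_\tau, \mu_1) \le \int_{\man} \<\grad \ot^\tau, \grad \log \rho_\tau\>\,\d \rho_\tau = \dot H(\mu_t^\tau)\big|_{t=0^+},
\]
where the final equality is the standard formula for the rate of entropy change along a continuity equation. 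By the trace decomposition \eqref{eq:tensor_ent_trace_intro}, the right-hand side equals $\int_{\man} \Tr[\dot \ET_0^{\rho_\tau \to \mu_1}(x)]\,\d \rho_\tau(x)$, so it will be enough to prove the pointwise bound
\[
\Tr[\dot \ET_0^{\rho_\tau \to \mu_1}(x)] \le n - \Tr[e^{-\ET_1^{\rho_\tau \to \mu_1}(x)}] \qquad \text{for every } x \in \dom(\rho_\tau, \mu_1),
\]
and then integrate and multiply by $2$.

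Writing $G(t) := \Tr[e^{-\ET_t}]$ for the fixed base point $x$, we have $G(0) = n$ and $\dot G(0) = -\Tr[\dot \ET_0]$ since $\ET_0 = 0$, so the pointwise bound above is exactly the tangent-line inequality $G(1) \le G(0) + \dot G(0)$; hence it suffices to show that $G$ is concave. Differentiating via Duhamel's formula $\tfrac{\d}{\d t}e^{-\ET_t} = -\int_0^1 e^{-s\ET_t}\, \dot \ET_t\, e^{-(1-s)\ET_t}\,\d s$ and the cyclicity of the trace yields
\[
\ddot G(t) = -\Tr[\ddot \ET_t\, e^{-\ET_t}] + \int_0^1 \Tr[\dot \ET_t\, e^{-s\ET_t}\, \dot \ET_t\, e^{-(1-s)\ET_t}]\,\d s.
\]
The matrix convexity hypothesis $\ddot \ET_t \succeq \dot \ET_t^2$ of Theorem \ref{thm: mainsecequiv_intro}\eqref{enum:strong_sec} together with $e^{-\ET_t} \succeq 0$ gives $-\Tr[\ddot \ET_t e^{-\ET_t}] \le -\Tr[\dot \ET_t^2 e^{-\ET_t}]$. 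Diagonalising $\ET_t$ with eigenvalues $(\lambda_i)$ and letting $B$ be the matrix of $\dot \ET_t$ in that basis (symmetric, since $\dot \ET_t$ is), one computes $\Tr[\dot \ET_t^2 e^{-\ET_t}] = \sum_{i,j} B_{ij}^2 \tfrac{e^{-\lambda_i} + e^{-\lambda_j}}{2}$ after symmetrising, whereas the second term equals $\sum_{i,j} B_{ij}^2 \int_0^1 e^{-s\lambda_j - (1-s)\lambda_i}\,\d s$. Jensen's inequality applied to the convex function $x \mapsto e^{-x}$ bounds each mixed exponential integral above by $\tfrac{1}{2}(e^{-\lambda_i} + e^{-\lambda_j})$, whence $\ddot G(t) \le 0$.

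The main obstacle will be the first step: making rigorous the super-differential inequality for $\wasser^{2}$ along the heat flow, and ensuring that for almost every $\tau$ the geodesic $\mu_t^\tau$, the potential $\ot^\tau$, and the Jacobi field matrices $\Jf_s^\tau$ entering the definition of $\ET_t^{\rho_\tau \to \mu_1}$ all carry enough regularity for the displayed derivative identities to hold. For a smooth compact manifold and compactly supported absolutely continuous measures this is by now a standard package: the super-derivative interpretation flagged in footnote~3 handles the differentiability issues, while compactness together with $\mu_i \in \PR$ delivers the smoothness of $\ot^\tau$ needed in the Jacobi-field calculations. By contrast, the matrix-analytic heart of the argument reduces to elementary linear algebra. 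As a sanity check, applying the scalar Jensen inequality $\tfrac{1}{n}\Tr[e^{-A}] \ge e^{-\Tr A/n}$ to the right-hand side of \eqref{eq:intrinsic_EVI_intro} recovers the dimensional EVI \eqref{eq:dim_EVI_intro}, confirming that the new bound is a genuine intrinsic dimensional improvement.
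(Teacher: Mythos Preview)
Your argument is correct, and the matrix-analytic core takes a genuinely different route from the paper. The paper first proves, for each unit vector $w$, that $s\mapsto e^{-\met_x(w,\ET_s w)}$ is concave (Lemma~\ref{lem:matrix_dis_equiv}), applies the tangent-line inequality direction by direction to obtain $\sum_i\bigl(1-e^{-\met_x(e^i,\ET_1 e^i)}\bigr)$, and then optimises over orthonormal bases via the Schur--Horn theorem (Lemma~\ref{lem:exp_mat}) to reach $n-\Tr[e^{-\ET_1}]$. You instead prove directly that $G(t)=\Tr[e^{-\ET_t}]$ is concave, using Duhamel's formula together with the Hermite--Hadamard inequality $\int_0^1 e^{-s\lambda_j-(1-s)\lambda_i}\,\d s\le\tfrac12(e^{-\lambda_i}+e^{-\lambda_j})$ (what you called ``Jensen'' is really this chord inequality for the convex map $s\mapsto e^{-s\lambda_j-(1-s)\lambda_i}$). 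Your approach is shorter---it bypasses the basis-optimisation lemma entirely---while the paper's decomposition isolates a reusable scalar concavity statement (Theorem~\ref{thm:main}\eqref{enum: cvx2}) that is of independent interest as one of the equivalent characterisations of nonnegative sectional curvature.

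One inaccuracy in your regularity discussion: compactness and $\mu_i\in\PR$ do \emph{not} make $\ot^\tau$ smooth; it is only semiconcave with Hessian existing almost everywhere. This does not affect your matrix computations, which are carried out pointwise on $\dom(\rho_\tau,\mu_1)$, but the passage from the super-derivative of $\wasser^2$ to $\int\Tr[\dot\ET_0]\,\d\rho_\tau$ needs the care taken in the paper's Lemma~\ref{lem:der_W_H} (reverse Fatou via the convexity of $s\mapsto\Tr[\ET_s]$), rather than a smoothness claim.
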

The inequality \eqref{eq:intrinsic_EVI_intro} improves on \eqref{eq:dim_EVI_intro} by a repeated application of  Jensen's inequality,
\begin{equation}
\begin{split}
\label{eq:jenen_twice_intro}
& \frac{1}{n}\int_{\man}\Tr[e^{-\ET_1^{\pheat_{\tau}\mu_0\to\mu_1}(x)}]\d \pheat_{\tau}\mu_0(x)\ge e^{-\frac{1}{n}\int_{\man}\Tr[\ET_1^{\pheat_{\tau}\mu_0\to\mu_1}(x)]\d \pheat_{\tau}\mu_0(x)}\overset{\eqref{eq:tensor_ent_trace_intro}}{=}e^{-\frac{1}{n}[H(\mu_1)-H(\pheat_{\tau}\mu_0)]}.
 \end{split}
\end{equation}
Let us state a corollary of Theorem \ref{thm:intrinsic_EVI_intro}, which will also clarify the sense in which these inequalities are ``intrinsic dimensional".  As is classical, evolution variational inequalities lead to contraction in Wasserstein distance along the heat flow. The diffusion of the heat flow blurs the distinction between $\pheat_T\mu_{0}$ and $\pheat_T\mu_1$ for $T>0$, so we expect the Wasserstein distance between them to not increase. Indeed,  \cite[Corollary 1]{vonRenesseSturm05} established the following contraction result, which can also be proved by the evolution variational inequality \eqref{eq:EVI_intro},
\begin{equation}
\label{eq:wass_contract_intro}
 \wasser^{2} (\pheat_T\mu_{0}, \pheat_T\mu_{1})\le  \wasser^{2} (\mu_{0}, \mu_{1}).
\end{equation}
Incorporating the dimensional information on $\man$, it was shown in \cite[Theorem 2.1]{BolleyGentilGuillinKuwada18}  that the dimensional evolution variational inequality \eqref{eq:dim_EVI_intro} leads to the dimensional improvement on \eqref{eq:wass_contract_intro}, 
\begin{equation}
\label{eq:dim_wass_contract_intro}
 \wasser^{2} (\pheat_T\mu_{0}, \pheat_T\mu_{1})\le  \wasser^{2} (\mu_{0}, \mu_{1})-8n\int_0^T \sinh^2\left(\frac{H(\pheat_{\tau}\mu_1)-H(\pheat_{\tau}\mu_{0})}{2n}\right)\d \tau. 
\end{equation}
Analogously, we will deduce from the intrinsic dimensional  evolution variational inequality \eqref{eq:intrinsic_EVI_intro} the following improvement of Wasserstein contraction along heat flows.

\begin{cor}[Intrinsic dimensional Wasserstein contraction along heat flows]
\label{cor:W_contract_intro}
Let $(\man, \met)$ be a smooth compact $n$-dimensional Riemannian manifold without boundary, and let $(\pheat_t)$ be the  heat semigroup on $\man$. Suppose that the sectional curvature of $\man$ is nonnegative. Then, for all $\mu_0,\mu_1\in \PR$, we have, for any $T>0$, 
\begin{equation}
\label{eq:intrinsic_wass_contract_intro}
 \wasser^{2} (\pheat_T\mu_{0}, \pheat_T\mu_{1})\le  \wasser^{2} (\mu_{0}, \mu_{1})-8\int_0^T\int_{\man} \Tr\left[\sinh^2\left(\frac{\ET_1^{\pheat_{\tau}\mu_0\to\pheat_{\tau}\mu_1}(x)}{2}\right)\right]\d \pheat_{\tau}\mu_0(x)\d \tau.
\end{equation}
\end{cor}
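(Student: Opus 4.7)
The approach mirrors the deduction of the dimensional contraction \eqref{eq:dim_wass_contract_intro} from the dimensional EVI \eqref{eq:dim_EVI_intro} in \cite{BolleyGentilGuillinKuwada18}. The plan is to apply the intrinsic dimensional evolution variational inequality \eqref{eq:intrinsic_EVI_intro} twice---once with $\pheat_{\tau}\mu_0$ flowing toward the fixed target $\pheat_{\tau}\mu_1$ and once with $\pheat_{\tau}\mu_1$ flowing toward the fixed target $\pheat_{\tau}\mu_0$---and then combine the two one-sided bounds into a differential inequality for the diagonal map $\tau \mapsto \wasser^{2}(\pheat_{\tau}\mu_0, \pheat_{\tau}\mu_1)$.

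Concretely, at a fixed $\tau \in (0,T)$, Theorem \ref{thm:intrinsic_EVI_intro} yields, for a.e.\ $\tau$,
\begin{equation*}
\frac{\d}{\d\sigma}\Big|_{\sigma=\tau} \wasser^{2}(\pheat_{\sigma}\mu_0, \pheat_{\tau}\mu_1) \leq 2n \int_{\man}\! \left(1 - \frac{\Tr\bigl[e^{-\ET_{1}^{\pheat_{\tau}\mu_0 \to \pheat_{\tau}\mu_1}(x)}\bigr]}{n}\right) \d \pheat_{\tau}\mu_0(x),
\end{equation*}
together with the analogous estimate obtained by interchanging the two measures and using the symmetry of $\wasser$:
\begin{equation*}
\frac{\d}{\d\sigma}\Big|_{\sigma=\tau} \wasser^{2}(\pheat_{\tau}\mu_0, \pheat_{\sigma}\mu_1) \leq 2n \int_{\man}\! \left(1 - \frac{\Tr\bigl[e^{-\ET_{1}^{\pheat_{\tau}\mu_1 \to \pheat_{\tau}\mu_0}(y)}\bigr]}{n}\right) \d \pheat_{\tau}\mu_1(y).
\end{equation*}

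The key ingredient that ties these bounds together is a reversal identity for the entropy tensor. If $(\nu_s)_{s\in[0,1]}$ is a Wasserstein geodesic with optimal transport map $\exp(\grad \ot)\colon \nu_0 \to \nu_1$, then the time-reversed curve is itself a Wasserstein geodesic from $\nu_1$ to $\nu_0$, and a direct calculation shows that the associated Jacobi matrices satisfy $\tilde{\Jf}_s(y) = \Jf_{1-s}(x)\,\Jf_1(x)^{-1}$ with $y = \exp_{x}(\grad\ot(x))$, whence $\ETP_{s}^{\mathrm{rev}}(y) = -\ETP_{1-s}(x)$ and, after integration,
\begin{equation*}
\ET_{1}^{\nu_1 \to \nu_0}(y) = -\ET_{1}^{\nu_0 \to \nu_1}(x).
\end{equation*}
Applying this identity to the second one-sided EVI and performing the change of variables $y = \exp_x(\grad \ot_\tau(x))$ against the push-forward relation $\pheat_{\tau}\mu_1 = [\exp(\grad \ot_\tau)]_{\#}\pheat_{\tau}\mu_0$ recasts it as an integral over $\pheat_{\tau}\mu_0$ featuring $e^{+\ET_{1}^{\pheat_{\tau}\mu_0 \to \pheat_{\tau}\mu_1}(x)}$ in place of $e^{-\ET_{1}^{\pheat_{\tau}\mu_1 \to \pheat_{\tau}\mu_0}(y)}$. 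Adding the two bounds and invoking the matrix identity $e^{A} + e^{-A} - 2I = 4 \sinh^{2}(A/2)$ collapses the estimate into
\begin{equation*}
\frac{\d}{\d\tau} \wasser^{2}(\pheat_{\tau}\mu_0, \pheat_{\tau}\mu_1) \leq -8 \int_{\man} \Tr\!\left[\sinh^{2}\!\left(\frac{\ET_{1}^{\pheat_{\tau}\mu_0 \to \pheat_{\tau}\mu_1}(x)}{2}\right)\right] \d \pheat_{\tau}\mu_0(x),
\end{equation*}
and integrating from $0$ to $T$ yields \eqref{eq:intrinsic_wass_contract_intro}.

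The main technical obstacle, as in the classical proofs of \eqref{eq:wass_contract_intro} and \eqref{eq:dim_wass_contract_intro}, is the rigorous justification of the passage from the two one-sided EVIs to a total derivative along the diagonal $\sigma = \tau$. As flagged in the footnote preceding Theorem \ref{thm:intrinsic_EVI_intro}, the correct objects are super-derivatives, and the decomposition of the diagonal derivative into its two partial components requires the chain-rule machinery for absolutely continuous $\wasser$-valued curves developed in \cite{BolleyGentilGuillinKuwada18}. Once that framework is in place, the reversal identity for $\ET_{1}$ is a routine consequence of the Jacobi-matrix calculus set up in Section \ref{sec:proofMain}.
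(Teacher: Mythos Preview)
Your proposal is correct and follows essentially the same route as the paper: two applications of the intrinsic dimensional EVI (one in each direction), the time-reversal identity $\ET_{1}^{\nu_1\to\nu_0}(y)=-\ET_{1}^{\nu_0\to\nu_1}(x)$ obtained from the Jacobi-matrix relation $\Jf_r^{\nu_1\to\nu_0}(y)=\Jf_{1-r}^{\nu_0\to\nu_1}(x)[\Jf_1^{\nu_0\to\nu_1}(x)]^{-1}$, the chain rule on the diagonal, the identity $e^{A}+e^{-A}-2I=4\sinh^{2}(A/2)$, and integration in $\tau$. The paper organizes the substitutions slightly differently (first deriving an ``opposite'' EVI and then replacing $\mu_0$ by $\pheat_s\mu_0$), but the content is the same.
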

Again,  Jensen's inequality shows that \eqref{eq:intrinsic_wass_contract_intro} improves on \eqref{eq:dim_wass_contract_intro}, and hence also on \eqref{eq:wass_contract_intro}. The following example demonstrates how the intrinsic dimension can be read off from \eqref{eq:intrinsic_wass_contract_intro}. 

\begin{exmpl}[Product manifolds] 
\label{ex:prod}
Suppose
\[
(\man,\met)=(\man_k,\met^k)\times (\man_{n-k},\met^{n-k})
\]
is a product manifold of two manifolds with dimensions $k$ and $n-k$, respectively (the upper-scripts in $\met^k$ and $\met^{n-k}$ signify the dimension of the underlying manifolds). Let $\mu_0,\mu_1\in\PR$ be a product probability measure of the form
\[
\mu_0=\rho_0\otimes \nu\qquad\textnormal{and}\qquad \mu_1=\rho_1\otimes \nu,
\]
for some $\rho_0,\rho_1\in \mathcal{P}_{\textnormal{c}}^{\textnormal{ac}}(\man_k)$ and  $\nu\in \mathcal{P}_{\textnormal{c}}^{\textnormal{ac}}(\man_{n-k})$. Since $\mu_0$ and $\mu_1$ are identical in $n-k$ coordinates, the intrinsic dimension of the Wasserstein geodesic connecting them should be $k$ rather than $n$. Dimensional inequalities such as  \eqref{eq:dim_wass_contract_intro} cannot detect this structure since they only exploit information about the \emph{ambient} space. In contrast, let us see the form that \eqref{eq:intrinsic_wass_contract_intro} takes in this example. The potential $\ot$ of the optimal transport map between $\mu_0$ and $\mu_1$ will be of the form
\[
\ot(x)=\ot_k(x^k),\quad \textnormal{for}\quad x=(x^k,x^{n-k})\in\man,\quad x^k\in\man_k, \quad x^{n-k}\in\man_{n-k},
\]
where $ -\ot_k:\man_{k}\to \R\cup\{-\infty\}$  is $\frac{d_k^2}{2}$-concave function (with $d_k$ the distance on $\man_k$ induced by $\met^k$). A simple calculation shows that
\begin{equation}
\label{eq:U_prod}
\ETP_s=\begin{bmatrix}
\ETP_s^k & 0_{k\times (n-k)}\\
0_{(n-k)\times k} & 0_{(n-k)\times (n-k)}
\end{bmatrix},
\end{equation}
where $\ETP_s^k$ is the $k\times k$ matrix defined by \eqref{eq:ent_prod_tensor_def_intro}, with the Jacobi fields associated to $\grad\ot^k$ on $\man_k$. Hence, by the definition
 \eqref{eq:pointwise_ent_prod_tens_lagrange_intro},  it follows that 
 \begin{align}
\label{eq:ent_tensor_prod_intro}
\sinh^2\left(\frac{\ET_1^{\pheat_{\tau}\mu_0\to\pheat_{\tau}\mu_1}(x)}{2}\right)=\sinh^2\left(\frac{\ET_1^{\pheat_{\tau}^k\rho_0\to\pheat_{\tau}^k\rho_1}(x)}{2}\right),
\end{align}
where $(\pheat_t^k)$ stands for the heat semigroup on $(\man_k,\met^k)$. Consequently, \eqref{eq:intrinsic_wass_contract_intro} now reads 
\begin{align}
\begin{split}
\label{eq:intrinsic_wass_contract_prod_intro}
 \wasser^{2} (\pheat_T\mu_{0}, \pheat_T\mu_{1})&\le  \wasser^{2} (\mu_{0}, \mu_{1})-8k\int_0^T\int_{\man} \frac{1}{k}\Tr\left[\sinh^2\left(\frac{\ET_1^{\pheat_{\tau}^k\rho_0\to\pheat_{\tau}^k\rho_1}(x)}{2}\right)\right]\d \pheat_{\tau}\mu_0(x)\d \tau\\
 &\le \wasser^{2} (\mu_{0}, \mu_{1})-8k\int_0^T \sinh^2\left(\frac{H(\pheat_{\tau}\mu_1)-H(\pheat_{\tau}\mu_{0})}{2k}\right)\d \tau,
\end{split} 
\end{align}
where the last inequality is by Jensen's inequality, and the additivity of $H$ with respect to product measures. Thus, the intrinsic dimensional inequality \eqref{eq:intrinsic_wass_contract_intro} facilitates the replacement of the ambient dimension $n$ in the dimensional inequality \eqref{eq:dim_wass_contract_intro} by the intrinsic dimension $k$. 
\end{exmpl}
Example \ref{ex:prod} demonstrates how our results capture the intrinsic dimension in settings where the separation between intrinsic and ambient dimensions is clear due to a product structure. However, our results are much more robust. Namely, they can capture settings which are ``approximately of product structure", in the sense that some of the eigenvalues of $\ET$ are small, which will translate to the last term  in \eqref{eq:intrinsic_wass_contract_intro} being of the correct order.

\begin{rem}
\label{rem:remove_compact}
Theorem \ref{thm:intrinsic_EVI_intro} and Corollary \ref{cor:W_contract_intro} were stated under compactness assumptions on $\man$. This assumption is made for technical convenience and can be relaxed; see  \cite[Theorem 23.19]{Villani09}.
\end{rem}
\subsection*{Organization of paper} 
In Section \ref{sec:proofMain} we prove Theorem \ref{thm: mainsecequiv_intro} on   the characterization of nonnegative sectional curvature in terms of matrix displacement convexity of the entropy tensor, as well as tensorial versions of Bochner inequalities. In Section \ref{sec:proofApplication} we prove the intrinsic dimensional evolution variational inequalities and Wasserstein contraction along heat flows (Theorem \ref{thm:intrinsic_EVI_intro} and Corollary \ref{cor:W_contract_intro}). 
\label{sec:discussion}

\subsection*{Acknowledgments} 

We thank Emanuel Milman for helpful comments on this manuscript. We learned of the application of Lemma \ref{lem:exp_mat} to our setting from ChatGPTo3.  

This material is based upon work supported by the National Science Foundation under Award Numbers DMS-2331920, DMS-2508545, and NSF-DMS 2154402. The second named author is also supported by ISF grant 2574/24	and NSF-BSF grant 2022707. 

\section{Sectional curvature and matrix displacement convexity}
\label{sec:proofMain}
In this section we prove our first main result, Theorem \ref{thm:main}, which characterizes nonnegative sectional curvature in terms of matrix displacement convexity of the entropy tensor, as well as tensorial versions of Bochner inequalities. We begin with Section \ref{subsec:prelim_riemann} and  Section \ref{subsec:prelim_ot}  which, for completeness,  provide the standard necessary preliminaries from Riemannian geometry and optimal transport. We then prove Theorem \ref{thm:main} in Section \ref{subsec:main_result}. 
\subsection{Preliminaries of Riemannian geometry}
\label{subsec:prelim_riemann}
In this section we present the classical notions and formulas from Riemannian geometry that will be needed for our work; we refer to \cite{GallotHulinLafontaine04, Petersen16} and \cite[\S 14]{Villani09} for more details. In Section \ref{subsub:curvature} we review the notion of curvature in Riemannian manifolds, and in Section \ref{subsubsec:Jacobi} we review Jacobi fields and the Jacobi equation. Using the Jacobi equation we derive in Section \ref{subsub:Bochner} a tensorial version of the Bochner formula.

\subsubsection{Curvature}
\label{subsub:curvature}
 Let $(\man,\met)$ be  a smooth complete $n$-dimensional Riemannian manifold without boundary. We denote by $\grad$ the Levi--Civita connection on $\man$, and recall that  for smooth vector fields $X, Y$ on $\man$ we have the  \emph{Riemann curvature tensor},
\begin{equation}
\label{eq:riem_def}
    \riem (X,Y) := \grad_{Y} \grad_{X} - \grad_{X} \grad_{Y} + \grad_{[X,Y]}.
\end{equation}
The relation between the \emph{sectional curvature}  and the Riemann curvature tensor goes through the equation, for any $x\in \man$ and a 2-dimensional plane $P\subseteq T_x\man$ in the tangent space $T_x\man$,
\begin{equation}
\label{eq:riem_sectional}
\Sec_x(P) = \frac{\met_x( \riem_x(u,v)u,v)}{|u|^2|v|^2-\met_x(u,v)^2},
\end{equation}
where $u,v\in T_x\man$ are any two vectors which span $P$. The \emph{Ricci curvature tensor} is the trace of the Riemann curvature tensor, 
\begin{equation}
\label{eq:ric_def}
    \ric (X,Y) :=\sum_{i=1}^n \met(e^i,\riem (X,e^i)Y),
\end{equation}
where $\{e^i\}_{i=1}^n$ is any orthonormal frame (the right-hand side of \eqref{eq:ric_def} is independent of the  choice of the frame). In terms of sectional curvature we have, for a given  $x\in \man$ and a unit vector $e\in T_x\man$,
\begin{equation}
\label{eq:ric_sec}
\ric_x(e,e)=\sum_{j=2}^n\sec_x(P_j),
\end{equation}
where, for $j=2,\ldots, n$, $P_j\subseteq T_x\man$ is the 2-dimensional plane spanned by $\{e,e^j\}$, with $e^2,\ldots,e^n\in T_x\man$ such that $\{e,e^2,\ldots,e^n\}$ forms an orthonormal basis of $T_x\man$.

\subsubsection{Jacobi fields} 
\label{subsubsec:Jacobi}
For a curve $(\geo_s)$ in $\man$ we denote by $\left(\frac{\d}{\d s}\geo_s\right)$ its velocity field, $\frac{\d}{\d s}\geo_s\in T_{\geo_s}\man$. A curve $(\geo_s)$ is a \emph{geodesic} if its acceleration is zero, 
\begin{equation}
\label{eq:geo_def}
\frac{\D}{\D s}\frac{\d}{\d s}\geo_s=0\qquad\textnormal{for all $s$},
\end{equation}
where $\frac{\D}{\D s}$ is the covariant derivative along $(\geo_s)$. Let $(\geo_s)$ be a geodesic, and let $r\mapsto \geo_s(r)$ be a geodesic perturbation of $\geo$, i.e.,  $\geo_s(0)=\geo_s$ for all $s$, and $s\mapsto \geo_s(r)$ is a geodesic for each fixed $r$.  By \eqref{eq:geo_def} we have $\frac{\D}{\D s}\frac{\d}{\d s}\geo_s(r)=0$ for all $s$ and $r$, so, taking the  covariant derivative  $\frac{\D}{\D r}$ of this equation along the curve $r\mapsto \geo_s(r)$, we get 
\begin{equation}
\label{eq:3der}
\frac{\D}{\D r}\frac{\D}{\D s}\frac{\d}{\d s}\geo_s=0\qquad\textnormal{for all $s$ and $r$}. 
\end{equation}
Exchanging the derivatives in \eqref{eq:3der} yields 
\begin{equation}
\label{eq:3der_exchange}
\riem\left(\frac{\d}{\d s}\geo_s(r),\frac{\d}{\d r}\geo_s(r)\right)\frac{\d}{\d s}\geo_s(r)+\frac{\D^2}{\D s^2}\frac{\d}{\d r}\geo_s(r)=0.
\end{equation}
Hence, defining the \emph{Jacobi field} 
\begin{equation}
\label{eq:Jacobi_def}
J_s:=\frac{\d}{\d r}\geo_s(r)\bigg|_{r=0},
\end{equation}
we get the \emph{Jacobi equation}
\begin{equation}
\label{eq:Jacobi_eq_1}
\frac{\D^2}{\D s^2} J_s+\riem\left(\frac{\d}{\d s}\geo_s, J_s\right)\frac{\d}{\d s}\geo_s=0.
\end{equation}
Conversely, being a second-order equation, equation \eqref{eq:Jacobi_eq_1} will have a unique solution as soon as we specify the geodesic $(\geo_s)$, $J_0$, and $\frac{\D}{\D s}J_s|_{s=0}$. This solution will correspond to some geodesic perturbation $(r,s)\mapsto\geo_s(r)$. 

Let us consider a family of Jacobi fields  $(J_s^i)$, for $i=1,\ldots,n$, along a given geodesic $(\geo_s)$. Set $\{e^i_0\}_{i=1}^n$ to be an orthonormal basis of $T_{\geo_0}\man$ such that $e^1_0\propto \frac{\d}{\d s}\geo_s |_{s=0}$, and let $\{e^i_s\}_{i=1}^n$ be the parallel transport of $\{e^i_0\}_{i=1}^n$ along $(\geo_s)$, i.e., $\frac{\D}{\D s}e^i_s=0$ for all $s$ and $i$. It will be convenient to describe the evolution of this family of Jacobi fields using moving frames. Let $(\Jf_s)$ be the $n\times  n$ matrix whose columns are the coordinates of $(J_s^1,\ldots,J_s^n)$ expressed in the basis  $\{e^i_s\}_{i=1}^n$. Then $(\Jf_s)$ solves the equation, in the space of $n\times n$ matrices,
\begin{equation}
\label{eq:jacobi_eq_matrix}
\ddot{\Jf}_s+\Rs_s\Jf_s=0,
\end{equation}
where $\dot{\Jf}_s, \ddot{\Jf}_s$ stand for the first and second derivatives, respectively, in $s$, and where $\Rs_s$ is an $n\times n$ matrix whose entries $\Rs_s=\{\Rs_s^{ij}\}_{i,j=1}^n$ are given by 
\begin{equation}
\label{eqRs_def}
\Rs_s^{ij}:=\met_{\geo_s}\left(\riem_{\geo_s}\left(\frac{\d}{\d s}\geo_s, e^i_s\right)\frac{\d}{\d s}\geo_s,e^j_s\right).
\end{equation}
When $\Jf_s$ is invertible for all $s$ we set 
\begin{equation}
\label{eq:Us_def}
\ETP_s:=\dot{\Jf}_s\Jf_s^{-1}.
\end{equation}
Differentiating \eqref{eq:Us_def}, and using \eqref{eq:jacobi_eq_matrix}, we find that $(\ETP_s)$ satisfies the equation 
\begin{equation}
\label{eq:Us_jacobi}
\dot{\ETP}_s+\ETP_s^2+\Rs_s=0. 
\end{equation}
As we saw in Section \ref{subsec:matrix_disp_cvx_intro}, the  entropy tensor $\ET_t^{\mu_0\to\mu_1}(x)$ is defined in terms of $(\ETP_s)_{s\in [0,t]}$, and the equation \eqref{eq:Us_jacobi} provides the link between the Riemann curvature tensor and the entropy tensor. This link is the analogue of the following classical connection between entropy and Ricci curvature. Taking the trace in \eqref{eq:Us_jacobi} we find that
\begin{equation}
\label{eq:U_ode_tr}
\frac{\d}{\d s}\Tr[\ETP_s]+\Tr[\ETP_s^2]+\ric\left(\frac{\d}{\d s}\geo_s,\frac{\d}{\d s}\geo_s\right)=0,
\end{equation}
which almost provides an equation for $\Tr[\ETP_s]$, except for the term $\Tr[\ETP_s^2]$. Using Jensen's inequality, $\Tr[\ETP_s^2]\ge \frac{\Tr^2[\ETP_s]}{n}$, we find the differential inequality for $s\mapsto \Tr[\ETP_s]$,
\begin{equation}
\label{eq:U_ode_inq_tr}
\frac{\d}{\d s}\Tr[\ETP_s]+ \frac{\Tr^2[\ETP_s]}{n}+\ric\left(\frac{\d}{\d s}\geo_s,\frac{\d}{\d s}\geo_s\right)\le 0.
\end{equation}
Again, as we saw in  Section \ref{subsec:matrix_disp_cvx_intro}, the entropy $H(\mu_t)$ of a geodesic $(\mu_t)_{t\in [0,1]}$ in $(\mathcal{P}_2(\man), \wasser)$ can be expressed in terms of $\Tr[\ETP_s]$, so \eqref{eq:U_ode_inq_tr} provides the connection between a lower bound on the Ricci curvature and the convexity of   $t\mapsto H(\mu_t)$. The presence of the $\frac{1}{n}$ factor in \eqref{eq:U_ode_inq_tr} is what leads to \emph{dimensional} functional inequalities. In contrast, we will work with \eqref{eq:Us_jacobi}, thus avoiding the dimensional price of Jensen's inequality, which will lead to \emph{intrinsic dimensional} functional inequalities.

\subsubsection{The tensorial Bochner formula}
\label{subsub:Bochner}
The last tool we need from Riemannain geometry is a tensorial version of the classical Bochner formula. The latter formula connects between the derivatives of a function and the Ricci tensor, and is crucial in building the connection between geometry and optimal transport. The \emph{Bochner formula} states that, for any smooth function $\psi:\man\to \R$, we have, for all $x\in \man$,
\begin{equation}
\label{eq:Bochner}
- \frac{1}{2} \Delta | \grad \psi |^{2} + \met(\nabla\psi, \grad \Delta\psi) + \Tr[(\grad^{2} \psi)^{2}] +  \ric(\grad \psi,\grad \psi) = 0,
\end{equation}
where all the terms in \eqref{eq:Bochner} are evaluated at $x$. Since we will be working on the level of the Riemann curvature tensor (sectional curvature), we will use a  tensorial form of \eqref{eq:Bochner}.

\begin{prop}
\label{prop:tensor_bochner}
For any smooth function $\psi:\man\to \R$ we have, for all $x\in \man$,
\begin{equation}
\label{eq:Bochner_tensor}
- \frac{1}{2} \grad^{2} | \grad \psi |^{2} + \grad_{\grad \psi} \grad^{2} \psi + (\grad^{2} \psi)^{2} +  \riem (\grad \psi, \cdot) \grad \psi = 0,
\end{equation}
where all the terms in \eqref{eq:Bochner_tensor} are evaluated at $x$.
\end{prop}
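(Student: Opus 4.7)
The plan is to verify the identity as an equality of $(0,2)$ tensors at each point $x\in\man$: I would fix $Y, Z\in T_x\man$, extend them to local vector fields, and show that the left-hand side evaluated on $(Y, Z)$ vanishes. Since every ingredient is built from the Levi-Civita connection and the Riemann tensor, the computation reduces to repeated use of the symmetry of the Hessian together with the Ricci identity (commutation of second covariant derivatives), which is essentially the definition \eqref{eq:riem_def} of $\riem$ itself.

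First I would expand $\tfrac{1}{2}\grad^2|\grad\psi|^2(Y,Z)$ using the identity $\grad^2 f(Y,Z) = Y(Zf) - (\grad_Y Z) f$ for $f = \met(\grad\psi, \grad\psi)$, together with the compatibility $\grad\met = 0$. A direct Leibniz calculation yields
$$
\tfrac{1}{2}\grad^2|\grad\psi|^2(Y,Z) = \met\bigl(\grad_Y\grad_Z\grad\psi - \grad_{\grad_Y Z}\grad\psi,\,\grad\psi\bigr) + \met\bigl(\grad_Y\grad\psi,\,\grad_Z\grad\psi\bigr).
$$
Viewing $\grad^2\psi$ as the self-adjoint endomorphism $V\mapsto \grad_V\grad\psi$, the second summand is precisely $(\grad^2\psi)^2(Y,Z)$, which already accounts for one of the matching terms.

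It then remains to identify the first summand with $(\grad_{\grad\psi}\grad^2\psi)(Y,Z) + \met(\riem(\grad\psi, Y)\grad\psi, Z)$. Observe that $\met(\grad_Y\grad_Z\grad\psi - \grad_{\grad_Y Z}\grad\psi,\, \grad\psi)$ equals $(\grad_Y\grad^2\psi)(Z, \grad\psi)$ by the defining formula for the covariant derivative of the Hessian tensor, and by symmetry of the Hessian this equals $(\grad_Y\grad^2\psi)(\grad\psi, Z)$. To transfer the differentiation direction from $Y$ to $\grad\psi$, I would invoke the Ricci identity on the 1-form $d\psi$: a direct derivation from \eqref{eq:riem_def} shows that for any 1-form $\omega$ and vector field $V$, $(\grad_X\grad_Y\omega - \grad_Y\grad_X\omega - \grad_{[X,Y]}\omega)(V) = \omega(\riem(X,Y)V)$. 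Applied with $\omega = d\psi$, this yields, after using the antisymmetry $\met(\riem(Y,Z)\grad\psi, \grad\psi) = 0$ to discard a spurious curvature contribution,
$$
(\grad_Y\grad^2\psi)(\grad\psi, Z) - (\grad_{\grad\psi}\grad^2\psi)(Y, Z) = \met\bigl(\riem(\grad\psi, Y)\grad\psi,\, Z\bigr).
$$

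Assembling the pieces produces the stated identity. The main obstacle is purely bookkeeping: the paper's curvature convention \eqref{eq:riem_def} differs in sign from the most common textbook choice, so one must carefully track the Ricci identity using $\riem(X,Y) = \grad_Y\grad_X - \grad_X\grad_Y + \grad_{[X,Y]}$, and use both the antisymmetry of $\riem$ in each pair of slots and the pairwise symmetry $\met(\riem(U,V)W,X) = \met(\riem(W,X)U,V)$ to bring the leftover curvature term into the asserted form $\met(\riem(\grad\psi, Y)\grad\psi, Z)$. As an alternative, one can perform the entire computation in normal coordinates at $x$, where $\Gamma^k_{ij}(x)=0$ and the curvature enters through $\partial_i\partial_j g^{kl}(x)$; the two routes give the same answer but the tensorial route above makes the role of each curvature identity transparent.
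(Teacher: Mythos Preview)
Your proposal is correct but follows a genuinely different route from the paper. You carry out a direct tensor-calculus verification: expand $\tfrac12\grad^2|\grad\psi|^2$ via the Leibniz rule to extract $(\grad^2\psi)^2$, then commute the two outer covariant derivatives of $\grad^3\psi$ using the Ricci identity to produce the curvature term. This is the standard textbook argument, and it works as you describe (the ``spurious'' term you mention does not actually appear once the swap is done in the first two slots of $\grad^3\psi$; the remaining curvature contribution $\met(\grad\psi,\riem(Y,\grad\psi)Z)$ converts directly to $\met(\riem(\grad\psi,Y)\grad\psi,Z)$ via the pair-antisymmetries, so nothing needs to be discarded).

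The paper instead derives the identity from the matrix Riccati equation \eqref{eq:Us_jacobi}, $\dot{\ETP}_s+\ETP_s^2+\Rs_s=0$, that governs Jacobi fields along a geodesic. It launches a geodesic $\gamma_s=\exp_x(s\xi(x))$, writes $\ETP_s=\grad\xi_s(\gamma_s)$ in Eulerian coordinates, computes $\dot{\ETP}_s$ via the transport equation for $\xi_s$, and then evaluates \eqref{eq:Us_jacobi} at $s=0$ with $\xi=\grad\psi$. Your approach is shorter and fully self-contained; the paper's approach is chosen because it mirrors exactly how the \emph{scalar} Bochner formula arises from tracing \eqref{eq:Us_jacobi}, and because the very same Riccati equation is what drives the proof of the main Theorem~\ref{thm:main}. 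In other words, the paper wants the reader to see \eqref{eq:Bochner_tensor} as the $s=0$ shadow of the Jacobi-field dynamics rather than as an isolated commutator identity.
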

\begin{proof}
The proof of \eqref{eq:Bochner_tensor} is analogous to the way in which \eqref{eq:Bochner} is derived from \eqref{eq:U_ode_tr} \cite[p. 387]{Villani09}, but where we use \eqref{eq:Us_jacobi} instead of  \eqref{eq:U_ode_tr}.  Fix $x\in \man$ and a smooth vector field $\xi$. Define the geodesic $(\geo_s)$ by $\geo_s:=\exp_x(s\xi(x))$ so that $\frac{\d}{\d s}\geo_s|_{s=0}=\xi(x)$. Then, for $s$ small enough,  we may define a vector field $\xi_s$ along the geodesic as the solution of the equation,
\begin{equation}
\label{eq:geodesic_Euler_app}
\frac{\d}{\d s}\geo_s=\xi_s(\geo_s),\qquad \xi_0=\xi.
\end{equation}
Since  $\frac{\D}{\D s}\frac{\d}{\d s}\geo_s=0$ we can differentiate \eqref{eq:geodesic_Euler_app} in $s$ to get
\begin{equation}
\label{eq:Euler_eq}
\left[\frac{\d}{\d s}\xi_s+\grad_{\xi_s}\xi_s\right](\geo_s)=0.
\end{equation}
Fix a basis $\{e_0^i\}_{i=1}^n$ of $T_x\man$, and define the geodesic perturbations $(\geo_s^i(r))$, for $i=1,\ldots, n$, by $\geo_s^i(r):=\exp_{\exp_x(re^i)}(s\xi(\exp_x(re^i)))$. The associated Jacobi fields $(J^1_s,\ldots, J^n_s)$ satisfy  the initial conditions
\begin{equation}
\label{eq:J0}
J^i_0=e_0^i,\qquad \frac{\D }{\D s}J_s^i\bigg|_{s=0} = (\grad\xi)e_0^i.\qquad 
\end{equation}
Taking the $\frac{\D}{\D r}$ derivative of \eqref{eq:geodesic_Euler_app}, applied to $\geo_s(r)$, we get
\begin{equation}
\label{eq:dsDrgeosr}
\frac{\d}{\d s}\frac{\D}{\D r}\geo_s(r)\bigg |_{r=0}=\grad\xi_s(\geo_s(r))\frac{\D}{\D r}\geo_s(r)\bigg |_{r=0},
\end{equation}
and hence the Jacobi matrix satisfies
\begin{equation}
\label{eq:dsJxi}
\dot{\Jf}_s=\nabla\xi_s(\geo_s)\Jf_s,
\end{equation}
where $\grad \xi_s$ is written in the basis $\{e^i_s\}_{i=1}^n$ of $T_{\geo_s}\man$ obtained by parallel transport of $\{e_0^i\}_{i=1}^n$ along $(\geo_s)$. Using the definition \eqref{eq:Us_def} it follows that 
\begin{equation}
\label{eq:U_Euler}
\ETP_s=\grad \xi_s(\geo_s).
\end{equation}
In particular, differentiating \eqref{eq:U_Euler} in $s$, and using \eqref{eq:geodesic_Euler_app} and \eqref{eq:Euler_eq},  yields
\begin{equation}
\label{eq:partialU_Euler}
\dot{\ETP}_s=[-\grad(\grad_{\xi_s}\xi_s)+\grad(\grad\xi_s)\xi_s](\geo_s).
\end{equation}
Combining \eqref{eq:U_Euler} and \eqref{eq:partialU_Euler}, we see that \eqref{eq:Us_jacobi} reads
\begin{equation}
\label{eq:U_ode_euler} 
\left[-\grad(\grad_{\xi_s}\xi_s)+\grad_{\xi_s}\grad\xi_s+(\grad \xi_s)^2+\met\left(\riem\left(\frac{\d}{\d s}\geo_s, \cdot\right)\frac{\d}{\d s}\geo_s, \cdot\right)\right](\geo_s)=0.
\end{equation}
In particular, at $s=0$, \eqref{eq:U_ode_euler}  reads
\begin{equation}
\label{eq:U0_ode_euler} 
[-\grad(\grad_{\xi}\xi)+\grad_{\xi}\grad\xi+(\grad \xi)^2+\met(\riem(\xi, \cdot)\xi, \cdot)](x)=0.
\end{equation}
Taking $\xi=\nabla\psi$ in \eqref{eq:U0_ode_euler}, and using $-\grad(\grad_{\grad\psi}\grad\psi)=- \frac{1}{2} \grad^{2} | \grad \psi |^{2}$, gives \eqref{eq:Bochner_tensor}. 
\end{proof}
 In the classical setting, the Bochner formula leads to the definition of the Bakry--\'Emery operator,
\begin{equation}
\label{eq:gamma_2_def}
\Gamma_{2}(\psi) :=\frac{1}{2} \Delta | \grad \psi |^{2} - \Tr[\grad_{\grad \psi} \grad^{2} \psi]\overset{\eqref{eq:Bochner}}{=}\Tr[(\grad^2\psi)^2]+\ric(\grad\psi,\grad\psi),
\end{equation}
which plays a crucial role in the relations between geometry, optimal transport, and functional inequalities. Analogously, we define the \emph{tensorial Bakry--\'Emery operator},
\begin{equation}
\label{eq:tilde_gamma_2_def}
\tilde{\Gamma}_{2}(\psi) :=\frac{1}{2} \grad^{2} | \grad \psi |^{2} - \grad_{\grad \psi} \grad^{2} \psi\overset{\eqref{eq:Bochner_tensor}}{=} (\grad^{2} \psi)^{2} +  \riem (\grad \psi, \cdot) \grad \psi,
\end{equation}
which will be used in our characterization of nonnegative sectional curvature. 

\subsection{Preliminaries of optimal transport}
\label{subsec:prelim_ot}
In this section we review the necessary preliminaries of optimal transport on Riemannian manifolds. In Section \ref{subsubsec:OT_map} we recall the classical Brenier--McCann theorem on the existence and properties of optimal transport maps on Riemannian manifolds. Using this result we describe in Section \ref{subsub:geodesics} the Wasserstein geodesics, and define our entropy tensor  as well as showing its connection to the entropy functional. In Section \ref{subsubsec:mat_disp_cvx}  we define the notion of matrix displacement convexity and establish some of its properties.
\subsubsection{The optimal transport map}
\label{subsubsec:OT_map}
One of the fundamental results in the theory of optimal transport is the characterization by Brenier, which was extended by McCann to the Riemannian setting, of the deterministic optimal coupling which attains the infimum in the definition of the Wasserstein distance \eqref{eq: w2def_intro}. To describe McCann's result, let $(\man,\met)$ be a smooth  complete Riemannian manifold without boundary, and say that a function $\psi: \man \to \R \cup \{ - \infty \}$ is \emph{$\frac{d^{2}}{2}$-concave} if it is not identically $- \infty$, and there exists a function $\varphi: \man \to \R \cup \{ \pm \infty \}$ such that, for every $x \in \man$,
    \begin{equation}
        \label{eq:c_trasnform_def}
        \psi (x) =  \inf_{y \in \man} \left\{\frac{1}{2} d^{2}(x,y)-\varphi (y)  \right\}.
    \end{equation}
A consequence of the Brenier--McCann theorem  \cite[Theorem 3.2]{Cordero-ErasquinMcCannSchmuckenschlager01}, \cite[Theorems 8 and 9]{McCann01} is that given $\mu_0,\mu_1\in \PR$, there exists a $\frac{d^{2}}{2}$-concave function $-\ot:\man\to  \R \cup \{ + \infty \}$ such that the map $\Ot:\man\to\man$ given by
\begin{equation}
\label{eq:B_Mc_transport}
\Ot(x):=\exp_{x}(\nabla\ot(x)),\qquad\textnormal{for almost-all}\quad x\in\man,
\end{equation}
pushes forward $\mu_0$ to $\mu_1$, and is optimal in the sense that
\begin{equation}
\label{eq:Waserr_Monge}
\wasser^2(\mu_0,\mu_1)=\int_{\man}d^2(x,\Ot(x))\d\mu_0(x). 
\end{equation}
The derivative of $\ot$ exists almost-everywhere in $\man$  \cite[Lemma 3.3]{Cordero-ErasquinMcCannSchmuckenschlager01}, and $\ot$ has a Hessian $\hess\ot$ almost everywhere \cite[Theorem 4.2]{Cordero-ErasquinMcCannSchmuckenschlager01}, which means that, for almost-all $x\in\man$, there exists a symmetric matrix $\hess_x\ot$ on $T_x\man$ such that, for all $u\in T_x\man$ and $\delta>0$,
\[
\ot(\exp_x(\delta u))=\ot(x)+\delta\,\met_x(\grad\ot(x),u)+\frac{\delta^2}{2}\,\met_x(u,\hess_x\ot\, u)+o(\delta^2). 
\]
It follows that, almost-everywhere, $\Ot(x)\notin \cut(x)$, where the cut locus $\cut(x)\subseteq\man$ is the set of points  in $\man$ which cannot be connected to $x$ with a minimizing geodesic \cite[Theorem 4.2]{Cordero-ErasquinMcCannSchmuckenschlager01}.

\subsubsection{Geodesics, Jacobi fields, and entropy tensors}
\label{subsub:geodesics}
The description \eqref{eq:B_Mc_transport} of the optimal transport map provides a description of the geodesics $(\mu_t)_{t\in [0,1]}$ in $(\PR,\wasser)$. Specifically, defining the maps $\Ot_t:\man\to\man$ by
\begin{equation}
\label{eq:B_Mc_transport_t}
\Ot_t(x):=\exp_{x}(t\nabla\ot(x)),\qquad\textnormal{for almost-all}\quad x\in\man,\qquad t\in [0,1],
\end{equation}
we have the relation \cite[Corollary 5.2]{Cordero-ErasquinMcCannSchmuckenschlager01}
\begin{equation}
\label{eq:geo_B_Mc_transport}
\mu_t=(\Ot_t)_{\sharp}\mu_0,\qquad t\in [0,1],
\end{equation}
where the fact that $\mu_t\in \PR$ for all $t\in [0,1]$ follows from \cite[Proposition 5.4]{Cordero-ErasquinMcCannSchmuckenschlager01}. Using Jacobi fields we may relate the densities of $\mu_0$ and $\mu_t$ via the following change of variables formula   \cite[Proposition 2.1]{MR2295207}: Given fixed $t\in [0,1]$  there exists a set $K_t\subseteq \man$ of full $\mu_0$ -measures such that, for each $x\in K_t$, it holds that
\begin{equation}
\label{eq:cov_formula}
\frac{\d \mu_0}{d\vol}(x)=\frac{\d \mu_t}{d\vol}\left(\Ot_t(x)\right)\det (\Jf_t(x)),
\end{equation}
where, given an orthonormal basis $\{e_0^i(x)\}_{i=1}^n$ of $T_x\man$, with $e^1_0(x)\propto \frac{\d}{\d s}\Ot_s |_{s=0}(x)$, $(\Jf_s(x))_{s\in [0,t]}$ is the unique Jacobi field matrix satisfying, for all $s\in [0,t]$,
\begin{equation}
\label{eq:jacobi_mat_ot}
\ddot{\Jf}_s(x)+\Rs_s(x)\Jf_s(x)=0,\qquad \qquad\Jf_0(x)=\Id_{\dd},\qquad \dot{\Jf}_0(x)=\hess_x\ot,
\end{equation}
where $\hess_x\ot$ is represented in the basis $\{e_0^i(x)\}_{i=1}^n$, and where, as in \eqref{eqRs_def}, $\Rs_s(x)$ is an $n\times n$ matrix defined by
\begin{equation}
\label{eqRs_ot_def}
\Rs_s^{ij}(x):=\met_{\Ot_s(x)}\left(\riem_{\Ot_s(x)}\left(\frac{\d}{\d s}\Ot_s(x),e^i_s(x)\right)\frac{\d}{\d s}\Ot_s(x),e^j_s(x)\right),
\end{equation}
with $\{e^i_s(x)\}_{i=1}^n$ being the orthonormal basis of of $T_{\Ot_s(x)}\man$ obtained by parallel transport of $\{e_0^i(x)\}_{i=1}^n$ along the geodesic $s\mapsto \Ot_s(x)$.  By uniqueness we may talk about \emph{the} Jacobi field matrix satisfying \eqref{eq:jacobi_mat_ot}  for all $s\in [0,1]$. Since,  for all $s\in [0,1]$, $\Jf_s(x)$ is invertible \cite[Proposition 2.1]{MR2295207}, we can define, for almost-all $x\in \man$, the symmetric \cite[p. 368]{Villani09} matrix
\begin{equation}
\label{eq:U_def}
\ETP_s(x):=\dot{\Jf}_s(x)\Jf_s^{-1}(x)\qquad\textnormal{for all}\qquad s\in [0,1].
\end{equation}
Arguing as in Section \ref{subsubsec:Jacobi}, we get the equation,  for all $s\in [0,1]$ and almost-all $x\in \man$,
\begin{equation}
\label{eq:U_ODE_ot}
\dot{\ETP}_s(x)+\ETP_s^2(x)+\Rs_s(x)=0,\qquad\qquad \ETP_0(x)=\hess_x\ot.
\end{equation}
Since $\ETP_s(x)$ is only defined for  almost-all $x\in \man$, let us denote, for any $\mu_0,\mu_1\in \PR$, 
\begin{equation}
\label{eq:domain_def}
\dom(\mu_0,\mu_1):=\{x\in \man: \hess_x\ot \textnormal{ exists where $\mu_1=[\exp_{x}(\nabla\ot(x))]_{\sharp}\mu_0$}\}.
\end{equation}
\begin{definition}
\label{def:entropy_tensor}
Given  $\mu_0,\mu_1\in \PR$ we define the \emph{entropy tensor}, at $t\in [0,1]$ and $x\in\dom(\mu_0,\mu_1)$, by setting
\begin{equation}
\label{eq:pointwise_ent_prod_tens_lagrange}
\ET_t^{\mu_0\to\mu_1}(x):=-\int_0^t \ETP_s (x)\d s. 
\end{equation}
\end{definition}

\begin{rem}
Strictly speaking, the entropy tensor defined in \eqref{eq:pointwise_ent_prod_tens_lagrange} depends only on the vector field $\grad\ot$. However, we always use the entropy tensor with a given source measure $\mu_0$, which in turn determines $\mu_1$ via  the optimal transport map $\exp(\grad\ot)$. Hence, we include $\mu_0$ and $\mu_1$ in the  notation $\ET_t^{\mu_0\to\mu_1}(x)$.
\end{rem}

\begin{rem} \label{rem: tensorsandbasis}
While the definition of the entropy tensor in Definition \ref{def:entropy_tensor} relied on a choice of an orthonormal basis $\{e^i(x)\}_{i=2}^n$ for $x\in \dom(\mu_0,\mu_1)$, our results hold for any such choice. In principle, using the identification of $\Jf_s(x)$ with  $\d \Ot_s(x)$, which is defined in some weak sense \cite[Page 622]{MR2295207}, one could formulate a frame-independent definition of the entropy tensor. However, for the most part it is more convenient to work with frames. 
\end{rem}
The next result expresses the entropy functional along the geodesic $(\mu_t)_{t\in [0,1]}$ in terms of the entropy tensors.
\begin{lem}
\label{lem:tr_ent_mat_is_ent}
Fix $\mu_0,\mu_1\in \PR$ and let $(\mu_t)_{t\in [0,1]}$ be the geodesic in  $\PR$ between $\mu_0$ and $\mu_1$. Then, 
\begin{equation}
\label{eq:tr_ent_mat_is_ent}
H(\mu_t)=H(\mu_0)+\int_{\man}\Tr[\ET_t^{\mu_0\to\mu_1}(x)]\d \mu_0(x).
\end{equation}
\end{lem}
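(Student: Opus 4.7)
The plan is to combine the Monge--Amp\`ere type change of variables formula \eqref{eq:cov_formula} with the Jacobi formula to rewrite $\log\det \Jf_t$ as the integral $\int_0^t \Tr[\ETP_s]\,\d s$, and then recognize the latter as $-\Tr[\ET_t^{\mu_0\to\mu_1}]$.

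First I would take logarithms in the change of variables formula \eqref{eq:cov_formula}, which gives, for $\mu_0$-a.e.\ $x$,
\[
\log \tfrac{\d\mu_0}{\d\vol}(x) \;=\; \log \tfrac{\d\mu_t}{\d\vol}\bigl(\Ot_t(x)\bigr) + \log\det(\Jf_t(x)).
\]
Integrating against $\mu_0$ and using that $\mu_t = (\Ot_t)_{\sharp}\mu_0$ to change variables in the first term on the right, I get
\[
H(\mu_0) \;=\; \int_{\man} \log \tfrac{\d\mu_t}{\d\vol}(y)\,\d\mu_t(y) \;+\; \int_{\man} \log\det(\Jf_t(x))\,\d\mu_0(x) \;=\; H(\mu_t) + \int_{\man}\log\det(\Jf_t(x))\,\d\mu_0(x),
\]
which rearranges to $H(\mu_t) = H(\mu_0) - \int_{\man}\log\det(\Jf_t(x))\,\d\mu_0(x)$.

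Next I would identify the integrand. Since $\Jf_s(x)$ is invertible for all $s\in[0,1]$ and $\mu_0$-a.e.\ $x$, the Jacobi formula gives
\[
\tfrac{\d}{\d s}\log\det(\Jf_s(x)) \;=\; \Tr\bigl[\dot{\Jf}_s(x)\,\Jf_s^{-1}(x)\bigr] \;\overset{\eqref{eq:U_def}}{=}\; \Tr[\ETP_s(x)].
\]
Integrating from $0$ to $t$ and using the initial condition $\Jf_0(x) = \Id_n$ from \eqref{eq:jacobi_mat_ot}, so that $\log\det(\Jf_0(x)) = 0$, yields
\[
\log\det(\Jf_t(x)) \;=\; \int_0^t \Tr[\ETP_s(x)]\,\d s \;\overset{\eqref{eq:pointwise_ent_prod_tens_lagrange}}{=}\; -\Tr[\ET_t^{\mu_0\to\mu_1}(x)].
\]
Substituting this identity into the displayed equation for $H(\mu_t)$ above gives \eqref{eq:tr_ent_mat_is_ent}.

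The one point that needs attention, rather than a serious obstacle, is the justification of Fubini's theorem when swapping the time integral $\int_0^t$ with the spatial integral $\int_{\man}\d\mu_0$ (which is implicit in moving $\Tr[\ET_t^{\mu_0\to\mu_1}(x)]$ through the outer integral), and the integrability of $\log\det \Jf_t$ against $\mu_0$. Both follow from standard optimal transport facts in this setting: $\mu_0,\mu_1\in\PR$ have compactly supported densities, $\ot$ is semi-concave on the support, and the relevant quantities are controlled by moving on a compact portion of $\man$, so the statement $H(\mu_t)\in\R$ together with the change of variables formula ensures that both sides of \eqref{eq:tr_ent_mat_is_ent} are finite and the manipulations above are legitimate.
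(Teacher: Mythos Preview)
Your proof is correct and follows essentially the same approach as the paper: take logarithms in the change of variables formula, integrate against $\mu_0$ to express $H(\mu_0)-H(\mu_t)$ as $\int_{\man}\log\det(\Jf_t)\,\d\mu_0$, and then use the Jacobi formula together with $\Jf_0=\Id_n$ to identify $\log\det(\Jf_t(x))$ with $-\Tr[\ET_t^{\mu_0\to\mu_1}(x)]$ pointwise. Your closing remark about Fubini is slightly misplaced---the identity $\log\det(\Jf_t(x))=-\Tr[\ET_t^{\mu_0\to\mu_1}(x)]$ is pointwise in $x$, so no interchange of integrals is needed---but the integrability concern is a fair one and handled as you describe.
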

\begin{proof}
Taking the logarithm on both sides of \eqref{eq:cov_formula}, and then integrating against $\mu_0$ over $K_t$, gives
\begin{equation}
\label{eq:cov_entropy_log}
H(\mu_0)=\int_{\man}\log\left(\frac{\d \mu_t}{d\vol}\left(\Ot_t(x)\right)\right)\d\mu_0(x)+\int_{\man}\log \det (\Jf_t(x))\d\mu_0(x),
\end{equation}
where we used that $\mu_0(K_t)=1$ to change the integration domain from $K_t$ to $\man$. By \eqref{eq:geo_B_Mc_transport} we have 
\[
\int_{\man}\log\left(\frac{\d \mu_t}{d\vol}\left(\Ot_t(x)\right)\right)\d\mu_0(x)=H(\mu_t),
\]
so it remains to show that 
\begin{equation}
\label{eq:use_jacobi_formula}
\int_{\man}\log \det (\Jf_t(x))\d\mu_0(x)=-\int_{\man}\Tr[\ET_t^{\mu_0\to\mu_1}(x)]\d \mu_0(x).
\end{equation}
By the Jacobi formula,
\begin{equation}
\label{eq:jacobi_formula}
\begin{split}
\frac{\d}{\d t}\log \det (\Jf_t(x))&=\Tr\left[\dot{\Jf}_t(x)\Jf_t(x)^{-1}\right]\overset{\eqref{eq:U_def}}{=}\Tr[\ETP_t(x)]\overset{\eqref{eq:pointwise_ent_prod_tens_lagrange} }{=}-\frac{\d}{\d t}\Tr[\ET_t^{\mu_0\to\mu_1}(x)],
\end{split}
\end{equation}
so since $\log \det (\Jf_0(x))=0=\Tr[\ET_0^{\mu_0\to\mu_1}(x)]$, \eqref{eq:jacobi_formula} implies that
\begin{equation}
\label{eq:log_Jtr_H}
\log \det (\Jf_t(x))=-\Tr[\ET_t^{\mu_0\to\mu_1}(x)],
\end{equation}
which yields \eqref{eq:use_jacobi_formula}.
\end{proof}

\subsubsection{Matrix displacement convexity}
\label{subsubsec:mat_disp_cvx}

With the definition \eqref{eq:pointwise_ent_prod_tens_lagrange} in hand we define the collection of entropy tensors,
\begin{equation}
\label{eq:H_gen_def}
\ET:=\{(\ET_t^{\mu_0\to\mu_1}(x))_{t\in [0,1],x\in \dom(\mu_0,\mu_1)}\}_{\mu_0,\mu_1\in \PR}.
\end{equation}
The next definition mirrors the classical definition of displacement convexity for the entropy functional. 

\begin{definition}
\label{def:matrix_displacement_cvx}
The entropy tensor $\ET$  is \emph{matrix displacement convex} if for every $\mu_0,\mu_1\in \PR$, $t\in [0,1]$, and $x\in\dom(\mu_0,\mu_1)$, it holds
\begin{equation}
\label{eq:matrix_disp_cvx_def__2nd_der}
\ddot{\ET}_{t}^{\mu_0\to\mu_1}(x) \succeq 0,
\end{equation}
where $\succeq$ is the Loewner order.
\end{definition}
Definition \ref{def:matrix_displacement_cvx} was formulated to mirror the classical definition of displacement convexity for the entropy functional, which can be obtained by taking the trace in \eqref{eq:matrix_disp_cvx_def__2nd_der} and using  Lemma \ref{lem:tr_ent_mat_is_ent}. However, as we saw in Theorem \ref{thm: mainsecequiv_intro}, we will in fact show that when $\man$ has nonnegative sectional curvature we have the inequality stronger than \eqref{eq:matrix_disp_cvx_def__2nd_der}, 
\begin{equation}
\label{eq:matrix_disp_cvx_def__2nd_der_strong}
\ddot{\ET}_{t}^{\mu_0\to\mu_1}(x) \succeq [\dot{\ET}_{t}^{\mu_0\to\mu_1}(x)]^2.
\end{equation}
The next result derives a convexity result for matrices which satisfy inequalities such as \eqref{eq:matrix_disp_cvx_def__2nd_der_strong}. 

\begin{lem}
\label{lem:matrix_dis_equiv}
Fix $T>0$.  Let $\{ \Matrix_t\}_{t\in [0,T]}$ be a family of symmetric linear operators on an inner-product space $(W, \langle\cdot,\cdot\rangle)$ such that $[0,T] \ni t\mapsto\langle w, \Matrix_tw\rangle$ is twice-differentiable for all $w\in W$, and
\begin{equation}
\label{eq:dttA}
\ddot{ \Matrix}_t\succeq [\dot{\Matrix}_t]^2\qquad\textnormal{for all}  \qquad t\in [0,T].
\end{equation}
Then, for every unit vector $w\in W$,
\begin{equation}
\label{eq:e-A}
[0,T]\ni t \mapsto e^{-\langle w, \Matrix_tw\rangle}\textnormal{ is concave.}
\end{equation}
\end{lem}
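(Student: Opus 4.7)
The plan is to reduce the concavity of $t\mapsto e^{-\langle w,\Matrix_t w\rangle}$ to a pointwise differential inequality on the scalar function $f(t):=\langle w,\Matrix_t w\rangle$, and then to derive that inequality by combining the matrix hypothesis \eqref{eq:dttA} with Cauchy--Schwarz. A direct computation gives
\begin{equation*}
\frac{\d^2}{\d t^2} e^{-f(t)} = \bigl(\dot f(t)^2-\ddot f(t)\bigr)\,e^{-f(t)},
\end{equation*}
so the concavity statement \eqref{eq:e-A} is equivalent to showing that $\ddot f(t)\ge \dot f(t)^2$ for all $t\in[0,T]$.

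Next I would express the derivatives of $f$ in terms of $\dot{\Matrix}_t$ and $\ddot{\Matrix}_t$: since $t\mapsto\langle w,\Matrix_t w\rangle$ is twice-differentiable, one has $\dot f(t)=\langle w,\dot{\Matrix}_t w\rangle$ and $\ddot f(t)=\langle w,\ddot{\Matrix}_t w\rangle$. Because each $\Matrix_t$ is symmetric, so is its derivative $\dot{\Matrix}_t$, and hence $\langle w,[\dot{\Matrix}_t]^2 w\rangle=\|\dot{\Matrix}_t w\|^2$. The Loewner hypothesis \eqref{eq:dttA} applied to the unit vector $w$ therefore reads
\begin{equation*}
\ddot f(t)=\langle w,\ddot{\Matrix}_t w\rangle \;\ge\; \langle w,[\dot{\Matrix}_t]^2 w\rangle \;=\; \|\dot{\Matrix}_t w\|^2.
\end{equation*}

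Finally, Cauchy--Schwarz combined with $\|w\|=1$ yields
\begin{equation*}
\dot f(t)^2=\langle w,\dot{\Matrix}_t w\rangle^2\le \|w\|^2\,\|\dot{\Matrix}_t w\|^2=\|\dot{\Matrix}_t w\|^2,
\end{equation*}
which chained with the previous display gives $\ddot f(t)\ge \dot f(t)^2$, completing the proof.

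I do not expect any genuine obstacle here: the argument is a two-line reduction plus Cauchy--Schwarz. The only point to be careful about is the symmetry of $\dot{\Matrix}_t$ (inherited from the pointwise symmetry of $\Matrix_t$ via the difference quotient), which is what allows us to identify $\langle w,[\dot{\Matrix}_t]^2 w\rangle$ with $\|\dot{\Matrix}_t w\|^2$ and thereby to make Cauchy--Schwarz applicable.
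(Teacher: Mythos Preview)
Your proof is correct and follows essentially the same route as the paper: compute the second derivative of $e^{-\langle w,\Matrix_t w\rangle}$, reduce concavity to $\langle w,\ddot{\Matrix}_t w\rangle\ge\langle w,\dot{\Matrix}_t w\rangle^2$, and close the gap between $\langle w,[\dot{\Matrix}_t]^2 w\rangle$ and $\langle w,\dot{\Matrix}_t w\rangle^2$ via Cauchy--Schwarz (the paper phrases this last step as Jensen's inequality, but it is the same estimate).
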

\begin{proof}
We have
\begin{align*}
\frac{\d}{\d t}e^{-\langle w, \Matrix_tw\rangle}=-e^{-\langle w, \Matrix_tw\rangle}\langle w,\dot{\Matrix}_tw\rangle,
\end{align*}
and 
\begin{align*}
\frac{\d^2}{\d t^2}e^{-\langle w, \Matrix_tw\rangle}=e^{-\langle w, \Matrix_tw\rangle}[\langle w,\dot{ \Matrix}_tw\rangle^2-\langle w,\ddot{\Matrix}_tw\rangle].
\end{align*}
Thus, the concavity of $t \mapsto e^{-\langle w, \Matrix_tw\rangle}$ is equivalent to 
\begin{equation}
\label{eq:2nd_der_conc}
\langle w,\ddot{\Matrix}_tw\rangle\ge \langle w,\dot{\Matrix}_tw\rangle ^2\quad\textnormal{for all}\quad w\in W.
\end{equation}
Jensen's inequality gives $\langle w,[\dot{\Matrix}_t]^2w\rangle\ge \langle w,\dot{\Matrix}_tw\rangle^2$ for every unit vector $w$, which completes the proof. 
\end{proof}

\begin{rem}
The trace analogue of Lemma \ref{lem:matrix_dis_equiv} states that if a twice-differentiable function $A:[0,T]\to \R$ satisfies, for some $c>0$,
\begin{equation}
\label{eq:trace_dtt_A}
\ddot{A}_t\ge c A_t^2,
\end{equation}
then 
\begin{equation}
\label{eq:trace_exp_concave}
[0,T]\ni t \mapsto e^{-cA_t}\textnormal{ is concave.}
\end{equation}
In fact, \eqref{eq:trace_exp_concave} is \emph{equivalent} to \eqref{eq:trace_dtt_A}. This is not the case in the tensor case because of the last Jensen inequality in the proof of Lemma \ref{lem:matrix_dis_equiv}. 
\end{rem}

\begin{rem}
\label{rem:matrix_disp_cvx_euclid_def}
The notion of matrix displacement convexity  presented in the flat Euclidean setting \cite{Shenfeld24} was defined by requiring the concavity of 
\begin{equation}
\label{matrix_disp_cvx_def_eulicd}
t\mapsto e^{-\met_x\left(w,\int_{\R^n}\ET_t^{\mu_0\to\mu_1}(x)\d \mu_0(x)w\right)}.
\end{equation}
The condition \eqref{matrix_disp_cvx_def_eulicd} implies the integral analogue of \eqref{eq:matrix_disp_cvx_def__2nd_der},
\begin{equation}
\label{matrix_disp_cvx_def_eulicd_cor}
\frac{\d^2}{\d t^2}\int_{\R^n}\ET_t^{\mu_0\to\mu_1}(x)\d \mu_0(x) \succeq  0.
\end{equation} 
Note that the convexity notion of  \eqref{matrix_disp_cvx_def_eulicd} is  weaker than requiring the concavity of 
\begin{equation}
\label{matrix_disp_cvx_def_eulicd_strong}
t\mapsto e^{-\met_x(w,\ET_t^{\mu_0\to\mu_1}(x)w)}\qquad \textnormal{for all} \quad x,
\end{equation}
and analogously, \eqref{matrix_disp_cvx_def_eulicd_cor} is weaker than requiring \eqref{eq:matrix_disp_cvx_def__2nd_der},
\begin{equation}
\label{matrix_disp_cvx_def_eulicd_srong_cor}
\frac{\d^2}{\d t^2}\ET_t^{\mu_0\to\mu_1}(x) \succeq  0 \qquad \textnormal{for all} \quad x. 
\end{equation}
\end{rem}

\subsection{Main result} 
\label{subsec:main_result}
We are now ready to prove our first main result.
\begin{thm}
\label{thm:main}
    Let $(\man,\met)$ be a smooth complete $n$-dimensional Riemannian manifold without boundary. The following are equivalent. 
    \begin{enumerate}[(i)]
        \item \label{enum: nonnegativesec} The sectional curvature of $\man$ is nonnegative. \\
             \item \label{enum: tensorbochnerlin} For every smooth compactly supported function $\psi:\man\to \R$,
\begin{equation*}
\label{eq:tilde_gamma_2}
\tilde{\Gamma}_{2}(\psi) (x)\succeq 0\qquad \textnormal{for all}\qquad x\in \man.
\end{equation*}
\item \label{enum: tensorbochner} For  every smooth compactly supported  function  $\psi:\man\to \R$,
\begin{equation*}
\label{eq:tilde_gamma_2_inq}
\tilde{\Gamma}_{2}(\psi)(x) \succeq (\grad^{2} \psi)^{2}(x) \qquad \textnormal{for all}\qquad x\in \man.  
\end{equation*}

\item \label{enum: cvx1} For every $\mu_0,\mu_1\in \PR$, $t\in [0,1]$, and $x\in\dom(\mu_0,\mu_1)$,
\begin{equation*}
\label{eq:matrix_disp_cvx_def_2nd_der}
\ddot{\ET}_{t}^{\mu_0\to\mu_1}(x) \succeq [\dot{\ET}_{t}^{\mu_0\to\mu_1}(x)]^2.
\end{equation*}

\item \label{enum: cvx2} For every $\mu_0,\mu_1\in \PR$, $t\in [0,1]$, and $x\in\dom(\mu_0,\mu_1)$, and a unit vector $w\in T_x\man$, the map
\begin{equation*}
\label{eq:matrix_disp_cvx_def}
[0,1]\ni t\mapsto e^{-\met_x(w,\ET_t^{\mu_0\to\mu_1}(x)w)}\quad\textnormal{is concave}.
\end{equation*}
\item \label{enum: cvx3} The entropy tensor $\ET$ is matrix displacement convex, i.e., $\ddot{\ET}_{t}^{\mu_0\to\mu_1}(x)\succeq 0$ for all $t\in [0,1]$ and $x\in\dom(\mu_0,\mu_1)$. 
\end{enumerate}
\end{thm}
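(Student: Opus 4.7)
The plan is to establish the six-way equivalence by combining $(i)\Leftrightarrow(ii)\Leftrightarrow(iii)$ with the cycle $(i)\Rightarrow(iv)\Rightarrow(v)\Rightarrow(vi)\Rightarrow(i)$ (supplemented by the trivial $(iv)\Rightarrow(vi)$). The pivotal identity, obtained by differentiating Definition \ref{def:entropy_tensor} and invoking the Riccati equation \eqref{eq:U_ODE_ot}, is
\begin{equation*}
\dot{\ET}_t^{\mu_0\to\mu_1}(x)=-\ETP_t(x),\qquad \ddot{\ET}_t^{\mu_0\to\mu_1}(x)-\bigl[\dot{\ET}_t^{\mu_0\to\mu_1}(x)\bigr]^2=\Rs_t(x).
\end{equation*}
This reveals condition (iv) as equivalent to the pointwise inequality $\Rs_t(x)\succeq 0$, which by \eqref{eqRs_ot_def} is the nonnegativity of every sectional curvature of a 2-plane in $T_{\Ot_t(x)}\man$ containing $\tfrac{\d}{\d s}\Ot_s(x)|_{s=t}$. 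In particular, $(i)\Rightarrow(iv)$ is immediate.

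For the remaining easy implications: $(iv)\Rightarrow(v)$ is Lemma \ref{lem:matrix_dis_equiv} applied with $\Matrix_t:=\ET_t^{\mu_0\to\mu_1}(x)$; $(iv)\Rightarrow(vi)$ is trivial since $[\dot{\ET}_t]^2\succeq 0$; and $(v)\Rightarrow(vi)$ follows by twice-differentiating $t\mapsto e^{-\met_x(w,\ET_t w)}$, since concavity forces $\met_x(w,\ddot{\ET}_tw)\ge \met_x(w,\dot{\ET}_tw)^2\ge 0$ for every unit $w$. Further, $(ii)\Leftrightarrow(iii)$ is a direct consequence of the tensorial Bochner identity $\tilde{\Gamma}_2(\psi)=(\grad^2\psi)^2+\riem(\grad\psi,\cdot)\grad\psi$ from Proposition \ref{prop:tensor_bochner} together with $(\grad^2\psi)^2\succeq 0$; $(i)\Rightarrow(iii)$ is the definition of nonnegative sectional curvature; and $(ii)\Rightarrow(i)$ follows by point-localization: given $x_0\in\man$ and $v\in T_{x_0}\man$, one constructs a smooth compactly supported $\psi$ with $\grad\psi(x_0)=v$ and $\hess_{x_0}\psi=0$ (take $\psi(x):=\met_{x_0}(v,\exp_{x_0}^{-1}(x))$ times a cutoff equal to $1$ near $x_0$; since the Christoffel symbols vanish at the origin of normal coordinates, the Hessian does too), whence (ii) at $x_0$ yields $\riem_{x_0}(v,\cdot)v\succeq 0$.

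The only remaining, and principal, implication is $(vi)\Rightarrow(i)$, whose content is to extract a pointwise geometric statement about arbitrary tangent vectors from the transport-theoretic hypothesis (vi). My plan is a localization-and-scaling argument mirroring the one just used for $(ii)\Rightarrow(i)$. With $x_0,v,\psi$ as above, set $\theta:=\e\psi$ for small $\e>0$; a standard perturbation-of-the-identity argument (using the injectivity radius and the inverse function theorem) shows that on a sufficiently small geodesic ball $B$ around $x_0$, $-\theta$ is $\tfrac{d^2}{2}$-concave and $\Ot:=\exp(\grad\theta)$ is a smooth diffeomorphism. Choose any $\mu_0\in\PR$ with $\supp\mu_0\subseteq B$ and set $\mu_1:=\Ot_\sharp\mu_0$. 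Since $\hess_{x_0}\theta=\e\hess_{x_0}\psi=0$, condition (vi) at $t=0$, $x=x_0$ together with the identity $\ddot{\ET}_0=\ETP_0^2+\Rs_0$ gives
\begin{equation*}
0\preceq \ddot{\ET}_0^{\mu_0\to\mu_1}(x_0)=\Rs_0(x_0),
\end{equation*}
and the $(i,j)$-entry of $\Rs_0(x_0)$ equals $\e^2\met_{x_0}(\riem_{x_0}(v,e^i_0)v,e^j_0)$ (as $\tfrac{\d}{\d s}\Ot_s(x_0)|_{s=0}=\e v$); dividing by $\e^2$ and varying $v,x_0$ proves (i). The main technical subtlety is the verification that $\theta=\e\psi$ defines a genuine Brenier--McCann optimal transport potential on $B$, which reduces to standard perturbation arguments around the identity map.
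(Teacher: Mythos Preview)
Your proposal is correct and follows essentially the same route as the paper: the equivalences $(i)\Leftrightarrow(ii)\Leftrightarrow(iii)$ via the tensorial Bochner identity and the localized test function $\psi$ with prescribed gradient and vanishing Hessian, the chain $(i)\Rightarrow(iv)\Rightarrow(v)\Rightarrow(vi)$ via the Riccati equation and Lemma~\ref{lem:matrix_dis_equiv}, and finally $(vi)\Rightarrow(i)$ by scaling that same $\psi$ to obtain a genuine $\tfrac{d^2}{2}$-concave potential and reading off $\Rs_0(x_0)\succeq 0$ from $\ddot{\ET}_0\succeq 0$ since $\ETP_0=\hess_{x_0}\theta=0$. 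The only cosmetic difference is that for $(vi)\Rightarrow(i)$ the paper routes the computation of $\ddot{\ET}_0$ through the Eulerian formula~\eqref{eq:partialU_Euler} and Proposition~\ref{prop:tensor_bochner}, whereas you invoke the Riccati equation~\eqref{eq:U_ODE_ot} directly; and for the $\tfrac{d^2}{2}$-concavity of the scaled potential the paper cites \cite[Theorem~13.5]{Villani09} rather than arguing by perturbation of the identity.
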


\begin{proof}
The implications \eqref{enum: nonnegativesec}$ \Rightarrow$\eqref{enum: tensorbochnerlin}  and  \eqref{enum: nonnegativesec}$\Rightarrow$\eqref{enum: tensorbochner} follow from  \eqref{eq:tilde_gamma_2_def}.  We will show that  \eqref{enum: tensorbochnerlin}$\Rightarrow$\eqref{enum: nonnegativesec} which, together with the trivial implication \eqref{enum: tensorbochner}$\Rightarrow$\eqref{enum: tensorbochnerlin}, will establish
\begin{equation}
\label{eq:123}
\eqref{enum: nonnegativesec}\quad\Longleftrightarrow   \quad  \eqref{enum: tensorbochnerlin}  \quad \Longleftrightarrow \quad  \eqref{enum: tensorbochner}.
\end{equation}
To show \eqref{enum: tensorbochnerlin}$\Rightarrow$\eqref{enum: nonnegativesec} we argue by contradiction. Suppose there exists $x\in\man$, and a plane $P\subseteq T_x\man$ spanned by vectors $u,v\in T_x\man$, such that the sectional curvature $\sec_{x}(P)$ is negative. Let us show how to construct a function $\psi$ such that $\grad \psi (x) = u$ and $\grad^{2} \psi (x) =0$. Choose a small closed ball $B_x\subseteq T_x\man$ of the origin in $T_x\man$ on which the exponential map is injective, and define $f:\exp_x(B_x)\to \R$ by setting, for each  $\exp_{x} (w)\in \exp_x(B_x)$,  $f(\exp_{x} (w)) := \met_x( u, w)$. Per \cite[Lemma 2.26]{Lee13} let $\psi:\man\to \R$ be a smooth compactly supported extension of $f$. By \eqref{eq:tilde_gamma_2_def},
    \[
    \begin{split}
\met_x (v , \tilde{\Gamma}_{2} (\psi)(x) v)  &=\met_x ( v , (\grad^{2} \psi)^{2}(x) v ) + \met_x ( v, \riem_x (\grad \psi (x), v) \grad \psi(x)) \\
&=  \met_x( v, \riem_x(u,v)u )= \left( | u |^{2}|v|^{2} - \met_x(u , v )^{2} \right) \sec_{x}(P) < 0,
\end{split}
    \]
    where the last inequality follows from the Cauchy--Schwarz inequality. This concludes  \eqref{enum: tensorbochnerlin}$\Rightarrow$\eqref{enum: nonnegativesec}, and hence \eqref{eq:123} is established. To complete the proof of the theorem we first note that
 \begin{equation}
\label{eq:456}
\eqref{enum: cvx1}\quad\overset{\textnormal{Lemma \ref{lem:matrix_dis_equiv}}}{\Longrightarrow}\quad\eqref{enum: cvx2},\qquad \eqref{enum: cvx1}\quad\overset{\textnormal{trivial}}{\Longrightarrow}\quad\eqref{enum: cvx3},\qquad \textnormal{and}\qquad \eqref{enum: cvx2}\quad\overset{\textnormal{trivial}}{\Longrightarrow}\quad\eqref{enum: cvx3},
 \end{equation}
so it suffices to show
\begin{equation}
\label{eq:14}
\eqref{enum: nonnegativesec}\quad\Longrightarrow \quad \eqref{enum: cvx1}\qquad\textnormal{and}\qquad \eqref{enum: cvx3} \quad  \Longrightarrow \quad \eqref{enum: nonnegativesec}.
\end{equation}
We begin with $\eqref{enum: nonnegativesec}\Rightarrow\eqref{enum: cvx1}$. Fix  $\mu_0,\mu_1\in \PR$  and let $(\ETP_s(x))_{s\in [0,1]}$ be defined by \eqref{eq:U_def}. Fix $x\in \dom(\mu_0,\mu_1)$. Then, for $t\in [0,1]$, 
\begin{align}
\label{eq:14proof}
\ddot{\ET}_{t}^{\mu_0\to\mu_1}(x)&\overset{\eqref{eq:pointwise_ent_prod_tens_lagrange}}{=}-\dot{\ETP_t}(x)\overset{\eqref{eq:U_ODE_ot}}{=}\ETP_t^2(x)+\Rs_t(x)\overset{\eqref{enum: nonnegativesec}}{\succeq}\ETP_t^2(x)\overset{\eqref{eq:pointwise_ent_prod_tens_lagrange} }{=}[\dot{\ET}_{t}^{\mu_0\to\mu_1}(x)]^2,
\end{align}
which is \eqref{enum: cvx1}.

Finally we prove \eqref{enum: cvx3}$\Rightarrow$\eqref{enum: nonnegativesec}. Suppose for the contrary that there exists $x\in\man$, and a plane $P\subseteq T_x\man$  such that the sectional curvature $\sec_{x}(P)$ is negative. Let  $u,v\in T_x\man$ be two orthonormal vectors which span $P$, and choose the function $f$ as was done in the proof of the implication  \eqref{enum: tensorbochnerlin}$\Rightarrow$\eqref{enum: nonnegativesec}. Denote its extension as $\ot$ so that $\grad\ot(x)=u$ and $\grad^2\ot(x)=0$. Further, by \cite[Theorem 13.5]{Villani09}, we may scale $\ot$ so that $-\ot$ is a $\frac{d^{2}}{2}$-concave function. Now take any $\mu_0\in \PR$ and let $\mu_t:=[\exp(t\nabla \ot)]_{\sharp}\mu_0$.  Note that $\dom(\mu_0,\mu_1)=\man$ since $\ot$ is smooth. Let $(\ETP_s(x))_{s\in [0,1]}$ be defined by \eqref{eq:U_def}, and note that, again using $-\grad(\grad_{\grad\ot}\grad\ot)=- \frac{1}{2} \grad^{2} | \grad \ot |^{2}$,
\begin{align}
\label{eq:2nd_time_der_entropy_tildegamma_2_proof}
\begin{split}
0&\overset{ \eqref{enum: cvx3}}{ \preceq}\ddot{\ET}_0^{\mu_0\to \mu_1}(x)\overset{\eqref{eq:pointwise_ent_prod_tens_lagrange}}{=}-\dot{\ETP}_0(x)\overset{ \eqref{eq:partialU_Euler}}{=}\left[ \frac{1}{2} \grad^{2} | \grad \ot |^{2}-\grad_{\grad\ot}\grad^2\ot\right](x)\\
&\overset{\eqref{eq:Bochner_tensor}}{=}[(\grad^{2} \ot)^{2} +  \riem (\grad \ot, \cdot) \grad \ot](x)\overset{\grad^2\ot(x)=0}{=}  [\riem (\grad \ot, \cdot) \grad \ot](x)\overset{\grad\ot(x)=u}{=}\riem_x(u, \cdot) u.
\end{split}
\end{align}
Taking the inner product with $v$ on both sides of \eqref{eq:2nd_time_der_entropy_tildegamma_2_proof} yields the desired contradiction. 
\end{proof}

\begin{rem}
\label{rem:compact_assumption}
In Theorem \ref{thm:main} we stated that nonnegative sectional curvature implies matrix displacement convexity for measures with compact support. Without the compact support assumption, the Brenier--McCann theorem requires the notion of approximate gradients. However, when $\man$ has nonnegative sectional curvature, approximate gradients can be replaced by true gradients and our proof goes through; see \cite[Theorem 10.41]{Villani09}.
\end{rem}

\begin{rem}
For a general family of matrices $(\Matrix_t)$, Lemma \ref{lem:matrix_dis_equiv} showed that we have the following relations between the convexity conditions of Theorem \ref{thm:main}\eqref{enum: cvx1}-\eqref{enum: cvx2}-\eqref{enum: cvx3},
\begin{equation}
\label{eq:implication_cvx}
\ddot{\Matrix}_t\succeq [\dot{\Matrix}_t]^2\quad \Longrightarrow \quad t \mapsto e^{-\met_x(w,\Matrix_tw)}\textnormal{ is concave}\quad \Longrightarrow \quad \ddot{\Matrix}_t\succeq 0.
\end{equation}
The implications in \eqref{eq:implication_cvx} cannot be reversed in general. However, a corollary of Theorem \ref{thm:main} is that for the entropy tensor all three notions of convexity in \eqref{eq:implication_cvx} are equivalent. 
\end{rem}
\begin{rem}
\label{rem:better}
In the Euclidean setting the work \cite{Shenfeld24} showed that 
\begin{equation}
\label{eq:Euclidean_diff_inq}
\frac{\d^2}{\d t^2}\int_{\R^{\dd}}\ET_{t}^{\mu_0\to\mu_1}(x)\d \mu_0(x) \succeq \left[\frac{\d}{\d t}\int_{\R^{\dd}}\ET_{t}^{\mu_0\to\mu_1}(x)\d \mu_0(x)\right]^2,
\end{equation}
which is weaker than the inequality in Theorem \ref{thm:main}\eqref{enum: cvx1} due to the additional Jensen inequality.  
\end{rem}

\section{Intrinsic dimensional functional inequalities}
\label{sec:proofApplication}

In this section we derive our intrinsic dimensional functional inequalities (Theorem \ref{thm:intrinsic_EVI_intro} and Corollary \ref{cor:W_contract_intro}). We begin with Theorem \ref{thm:intrinsic_EVI_intro}.

\begin{thm}[Intrinsic dimensional evolution inequality]\label{thm:int_dim_EVI}
Let $(\man, \met)$ be a smooth compact $n$-dimensional Riemannian manifold without boundary, and let $(\pheat_t)$ be the  heat semigroup on $\man$. Suppose that the sectional curvature of $\man$ is nonnegative. Then, for all $\mu_0,\mu_1\in \PR$, given $T>0$, we have, for almost all $\tau\in (0,T)$,
\begin{equation}
\label{eq:int_dim_EVI}
\frac{\d}{\d \tau}\wasser^{2} (\pheat_{\tau}\mu_{0}, \mu_{1}) \leq 2\int_{\man}\left(n - \Tr[e^{-\ET_1^{\pheat_{\tau}\mu_0\to\mu_1}(x)}]\right)\d \pheat_{\tau}\mu_0(x).
\end{equation}
\end{thm}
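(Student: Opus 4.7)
The plan is to combine Theorem \ref{thm:main} with a matrix trace-exponential concavity lemma to upgrade the pointwise displacement convexity of the entropy tensor to a concavity statement for the ``dimensional'' scalar $x\mapsto \Tr[e^{-\ET_t^{\mu\to\mu_1}(x)}]$, and then to merge this with the standard Wasserstein calculus for the heat semigroup.

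The starting point is Theorem \ref{thm:main}\eqref{enum: cvx1}: nonnegative sectional curvature gives, for each $\mu,\mu_1\in\PR$ and $x\in\dom(\mu,\mu_1)$,
\[
\ddot{\ET}_t^{\mu\to\mu_1}(x) \succeq [\dot{\ET}_t^{\mu\to\mu_1}(x)]^2, \qquad t\in[0,1].
\]
By Lemma \ref{lem:exp_mat}, this matrix differential inequality implies that $[0,1]\ni t\mapsto \Tr[e^{-\ET_t^{\mu\to\mu_1}(x)}]$ is concave. Using $\ET_0=0$ and $\dot{\ET}_0(x) = -\hess_x\ot$, where $-\ot$ is the $\frac{d^2}{2}$-concave Brenier--McCann potential with $[\exp(\grad\ot)]_\sharp\mu=\mu_1$, the derivative at $t=0$ equals $\Tr[\hess_x\ot]=\lap \ot(x)$, so the ``tangent above chord'' inequality for concave functions yields the pointwise bound $\Tr[e^{-\ET_1^{\mu\to\mu_1}(x)}]-n\le \lap\ot(x)$. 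Integrating against $\mu$ gives
\[
\int_\man\Tr[e^{-\ET_1^{\mu\to\mu_1}(x)}]\,\d\mu(x) - n \le \int_\man \lap\ot(x)\,\d\mu(x).
\]

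To close the argument I would use the standard Wasserstein calculus identity along the heat flow. Since the velocity field of $(\pheat_\tau\mu_0)$ is $-\grad\log\rho_\tau$ and the Wasserstein gradient of $\nu\mapsto\tfrac12\wasser^2(\nu,\mu_1)$ at $\nu=\pheat_\tau\mu_0$ is $-\grad\ot_\tau$, where $-\ot_\tau$ is the $\frac{d^2}{2}$-concave potential transporting $\pheat_\tau\mu_0$ to $\mu_1$, pairing these and integrating by parts yields, for a.e.\ $\tau\in(0,T)$,
\[
\frac{\d}{\d\tau}\tfrac12\wasser^2(\pheat_\tau\mu_0,\mu_1) = -\int_\man \lap\ot_\tau(x)\,\d(\pheat_\tau\mu_0)(x).
\]
Substituting the estimate from the previous paragraph with $\mu=\pheat_\tau\mu_0$ and doubling produces the claim.

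\emph{Expected main obstacle.} The chief technical difficulty is making this calculus rigorous: the potential $\ot_\tau$ is only $\frac{d^2}{2}$-concave so $\hess_x\ot_\tau$ (and hence $\lap\ot_\tau$) exists only almost everywhere, with an Aleksandrov-type Hessian that must be distinguished from the distributional Laplacian on the cut locus; the map $\tau\mapsto \wasser^2(\pheat_\tau\mu_0,\mu_1)$ is only absolutely continuous, so the equality for $\frac{\d}{\d\tau}\wasser^2$ holds only a.e.\ (or as a super-derivative); and differentiating in $t$ under the $\mu$-integral at $t=0$ in the first step requires uniform integrability of the integrand near $t=0$. All of these points are standard in this line of work and are handled by the regularity framework for optimal transport and for the heat semigroup on compact Riemannian manifolds developed in \cite[\S10, \S23]{Villani09}; moreover, for $\tau>0$ the heat semigroup regularizes $\pheat_\tau\mu_0$ into a measure in $\PR$ with smooth bounded density, which significantly simplifies the technicalities.
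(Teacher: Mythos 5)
Your overall strategy is the same as the paper's: combine the pointwise matrix inequality $\ddot{\ET}_t\succeq[\dot{\ET}_t]^2$ from Theorem \ref{thm:main} with a tangent-above-chord estimate at $t=0$ to get $\Tr[e^{-\ET_1^{\mu\to\mu_1}(x)}]-n\le -\Tr[\dot{\ET}_0^{\mu\to\mu_1}(x)]$, and feed this into the derivative bound $\tfrac12\tfrac{\d}{\d\tau}\wasser^2(\pheat_\tau\mu_0,\mu_1)\le\int\Tr[\dot{\ET}_0^{\pheat_\tau\mu_0\to\mu_1}]\,\d\pheat_\tau\mu_0$ coming from the Wasserstein/heat-flow calculus of \cite[\S 23]{Villani09} (the paper proves the latter as Lemma \ref{lem:der_W_H}, carefully replacing super-derivatives by derivatives and using reverse Fatou; your "identity" $\tfrac{\d}{\d\tau}\tfrac12\wasser^2=-\int\lap\ot_\tau$ is really the inequality $\le$ with the Aleksandrov Laplacian, but you flag this correctly).

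There is, however, one concrete mis-step: Lemma \ref{lem:exp_mat} does \emph{not} say that $t\mapsto\Tr[e^{-\ET_t(x)}]$ is concave. It is a Schur--Horn statement identifying $n-\Tr[e^{-\Matrix}]$ as the \emph{infimum} over orthonormal bases of $\sum_i(1-e^{-\langle e^i,\Matrix e^i\rangle})$; equivalently, $\Tr[e^{-\ET_t}]$ is a \emph{supremum} of the basis-wise sums $\sum_i e^{-\met_x(e^i,\ET_t e^i)}$, each of which is concave in $t$ by Lemma \ref{lem:matrix_dis_equiv} --- and a supremum of concave functions need not be concave. So the concavity of the trace-exponential does not follow from the lemmas you cite (it happens to be true, but proving it requires a separate Duhamel-type computation that the paper deliberately avoids). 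The repair is exactly the paper's argument and costs nothing: fix an orthonormal basis $\{e^i(x)\}$, apply the tangent-above-chord inequality to each concave scalar function $c_{e^i}(s)=e^{-\met_x(e^i,\ET_s e^i)}$; since $\ET_0=0$, every basis yields the same derivative $-\Tr[\dot{\ET}_0]$ at $s=0$, so summing gives $-\Tr[\dot{\ET}_0(x)]\ge\sum_i e^{-\met_x(e^i,\ET_1 e^i)}-n$; only then invoke Lemma \ref{lem:exp_mat} (at $t=1$) to replace the right-hand side by $\Tr[e^{-\ET_1(x)}]-n$ after optimizing over bases. With that substitution your proof coincides with the paper's.
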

\begin{proof}
We follow the usual strategy to establish evolution variational inequalities by using the fact that the heat flow is the gradient flow of the entropy functional over the Wasserstein space. 
\begin{lem}
\label{lem:der_W_H}
Let $(\man, \met)$ and $\mu_0,\mu_1$ be as in Theorem \ref{thm:int_dim_EVI}.  Let $(\mu_{s})_{s \in [0,1]}$ be the Wasserstein geodesic between $\mu_0$ and $\mu_1$, and let $(\mu_s(\tau))_{s\in [0,1]}$ be the Wasserstein geodesic between $\pheat_{\tau}\mu_0$ and $\mu_1$.  Then, given $T>0$, we have, for almost all $\tau \in (0,T)$,
\label{cl:der_entropy_zero}
\begin{equation}
\label{eq:der_entropy_zero}
\frac{1}{2}\frac{\d}{\d \tau}\wasser^{2} (\pheat_{\tau}\mu_{0}, \mu_1)\le\int_{\man}\Tr[\dot{\ET}_0^{\mu_0(\tau)\to\mu_1(\tau)}(x)]\d \mu_0(\tau)(x).
\end{equation}
\end{lem}
\begin{proof}
By \cite[Definition 23.7, Corollary 23.23, and Remark 23.24]{Villani09},
\begin{equation}
\label{eq:H_W}
\frac{1}{2}\frac{\d^+}{\d \tau}\wasser^{2} (\pheat_{\tau}\mu_{0}, \mu_1)\le\frac{\d^+}{\d s}H(\mu_s(\tau))\bigg|_{s=0},
\end{equation}
where for a function $\zeta:[0,\infty)\to \R$ we let the super-derivative be
\[
\frac{\d^+}{\d t}\zeta(t):=\limsup_{\delta\downarrow 0}\frac{\zeta(t+\delta)-\zeta(t)}{\delta}.
\]
We will show that the super-derivatives on both sides of \eqref{eq:H_W} can be replaced by regular derivatives. Given $T>0$ it follows from \cite[Theorem 23.9]{Villani09} that the super-derivative on the left-hand side of \eqref{eq:H_W} can be replaced by a derivative for almost all $\tau\in (0,T)$. 
To analyze  the right-hand side of   \eqref{eq:H_W} we first write, using  $\Tr[\ET_0^{\mu_0(\tau)\to\mu_1(\tau)}(x)]=0$,
\begin{align}
\begin{split}
\label{eq:limsup}
&\frac{\d^+}{\d s}H(\mu_s(\tau))\bigg |_{s=0}=\limsup_{\delta\downarrow 0}\frac{H(\mu_{\delta}(\tau))-H(\mu_0(\tau))}{\delta}\\
&\overset{\eqref{eq:tr_ent_mat_is_ent}
}{=}\limsup_{\delta\downarrow 0}\int_{\man}\frac{\Tr[\ET_{\delta}^{\mu_0(\tau)\to\mu_1(\tau)}(x)]-\Tr[\ET_0^{\mu_0(\tau)\to\mu_1(\tau)}(x)]}{\delta}\d \mu_0(\tau)(x).
\end{split}
\end{align}
Next we want pass the $\limsup$ in \eqref{eq:limsup} inside the integral, to which end we will upper bound the integrand. 
By Theorem \ref{thm:main}\eqref{enum: cvx3},  $s\mapsto \Tr[\ET_s^{\mu_0(\tau)\to\mu_1(\tau)}(x)]$ is convex, so 
\begin{equation}
\label{eq:cvx_tr_H}
\Tr[\ET_{\delta}^{\mu_0(\tau)\to\mu_1(\tau)}(x)]\le (1-\delta)\Tr[\ET_0^{\mu_0(\tau)\to\mu_1(\tau)}(x)]+\delta \Tr[\ET_1^{\mu_0(\tau)\to\mu_1(\tau)}(x)]=\delta \Tr[\ET_1^{\mu_0(\tau)\to\mu_1(\tau)}(x)],
\end{equation}
and hence 
\begin{equation}
\label{eq:tr_H_quotient}
\frac{\Tr[\ET_{\delta}^{\mu_0(\tau)\to\mu_1(\tau)}(x)]-\Tr[\ET_0^{\mu_0(\tau)\to\mu_1(\tau)}(x)]}{\delta}=\frac{\Tr[\ET_{\delta}^{\mu_0(\tau)\to\mu_1(\tau)}(x)]}{\delta}\le \Tr[\ET_1^{\mu_0(\tau)\to\mu_1(\tau)}(x)].
\end{equation}
For each $s\in [0,1]$ the functions $x\mapsto \Tr[\ET_s^{\mu_0(\tau)\to\mu_1(\tau)}(x)]$ are $\mu_0(\tau)$-integrable by Lemma \ref{lem:tr_ent_mat_is_ent}, so by reverse Fatou's Lemma, we conclude from \eqref{eq:limsup} that
\begin{equation}
\begin{split}
\label{eq:limsup_reverse_fatou}
\frac{\d^+}{\d s}H(\mu_s(\tau))\bigg |_{s=0}&\le\int_{\man} \limsup_{\delta\downarrow 0}\frac{\Tr[\ET_{\delta}^{\mu_0(\tau)\to\mu_1(\tau)}(x)]-\Tr[\ET_0^{\mu_0(\tau)\to\mu_1(\tau)}(x)]}{\delta}\d \mu_0(\tau)(x)\\
&=\int_{\man}\Tr[\dot{\ET}_0^{\mu_0(\tau)\to\mu_1(\tau)}(x)]\d \mu_0(\tau)(x),
\end{split}
\end{equation}
where the equality follows from the differentiability of $s\mapsto \Tr[\dot{\ET}_s^{\mu_0(\tau)\to\mu_1(\tau)}(x)]$. The latter differentiability implies that we can replace the $\limsup$  in \eqref{eq:limsup_reverse_fatou} by $\liminf$, so Fatou's Lemma implies
\begin{equation}
\begin{split}
\label{eq:limsup_reverse__direct_fatou}
\frac{\d}{\d s}H(\mu_s(\tau))\bigg |_{s=0}=\int_{\man}\Tr[\dot{\ET}_0^{\mu_0(\tau)\to\mu_1(\tau)}(x)]\d \mu_0(\tau)(x).
\end{split}
\end{equation}
\end{proof}
By Lemma \ref{lem:der_W_H} we now have,  for any measurable choice of orthonormal basis $\{e^i(x)\}_{i=1}^n$ of $T_x\man$, 
\begin{align}
\label{eq:W_der_heat}
\begin{split}
\frac{1}{2}\frac{\d}{\d \tau}\wasser^{2} (\pheat_{\tau}\mu_{0}, \mu_1)&\le \int_{\man}\Tr[\dot{\ET}_0^{\mu_0(\tau)\to\mu_1(\tau)}(x)]\d \mu_0(\tau)(x)\\
&=\sum_{i=1}^n\int_{\man}\met_x\left(\dot{\ET}_0^{\pheat_{\tau}\mu_0\to\mu_1}(x)e^i(x),e^i(x)\right)\d \pheat_{\tau}\mu_0(x).
\end{split}
\end{align}

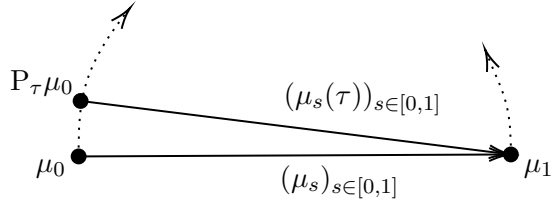
\begin{figure}
\begin{center}
\tikzset{every picture/.style={line width=0.75pt}} 
\begin{tikzpicture}[x=0.75pt,y=0.75pt,yscale=-1,xscale=1]
\draw    (140,177) -- (356,176.01) ;
\draw [shift={(358,176)}, rotate = 179.74] [color={rgb, 255:red, 0; green, 0; blue, 0 }  ][line width=0.75]    (10.93,-3.29) .. controls (6.95,-1.4) and (3.31,-0.3) .. (0,0) .. controls (3.31,0.3) and (6.95,1.4) .. (10.93,3.29)   ;
\draw  [dash pattern={on 0.84pt off 2.51pt}]  (140,177) .. controls (140,149.42) and (140.97,133.48) .. (164.89,103.38) ;
\draw [shift={(166,102)}, rotate = 128.88] [color={rgb, 255:red, 0; green, 0; blue, 0 }  ][line width=0.75]    (10.93,-3.29) .. controls (6.95,-1.4) and (3.31,-0.3) .. (0,0) .. controls (3.31,0.3) and (6.95,1.4) .. (10.93,3.29)   ;

\draw  [dash pattern={on 0.84pt off 2.51pt}]  (358,176) .. controls (358,148.7) and (354.2,146.12) .. (345.67,124.69) ;
\draw [shift={(345,123)}, rotate = 68.63] [color={rgb, 255:red, 0; green, 0; blue, 0 }  ][line width=0.75]    (10.93,-3.29) .. controls (6.95,-1.4) and (3.31,-0.3) .. (0,0) .. controls (3.31,0.3) and (6.95,1.4) .. (10.93,3.29)   ;

\draw  [fill={rgb, 255:red, 0; green, 0; blue, 0 }  ,fill opacity=1 ] (136.5,177) .. controls (136.5,175.07) and (138.07,173.5) .. (140,173.5) .. controls (141.93,173.5) and (143.5,175.07) .. (143.5,177) .. controls (143.5,178.93) and (141.93,180.5) .. (140,180.5) .. controls (138.07,180.5) and (136.5,178.93) .. (136.5,177) -- cycle ;
\draw  [fill={rgb, 255:red, 0; green, 0; blue, 0 }  ,fill opacity=1 ] (354.5,176) .. controls (354.5,174.07) and (356.07,172.5) .. (358,172.5) .. controls (359.93,172.5) and (361.5,174.07) .. (361.5,176) .. controls (361.5,177.93) and (359.93,179.5) .. (358,179.5) .. controls (356.07,179.5) and (354.5,177.93) .. (354.5,176) -- cycle ;
\draw  [fill={rgb, 255:red, 0; green, 0; blue, 0 }  ,fill opacity=1 ] (137.5,149) .. controls (137.5,147.07) and (139.07,145.5) .. (141,145.5) .. controls (142.93,145.5) and (144.5,147.07) .. (144.5,149) .. controls (144.5,150.93) and (142.93,152.5) .. (141,152.5) .. controls (139.07,152.5) and (137.5,150.93) .. (137.5,149) -- cycle ;
\draw    (141,149) -- (356.02,175.75) ;
\draw [shift={(358,176)}, rotate = 187.09] [color={rgb, 255:red, 0; green, 0; blue, 0 }  ][line width=0.75]    (10.93,-3.29) .. controls (6.95,-1.4) and (3.31,-0.3) .. (0,0) .. controls (3.31,0.3) and (6.95,1.4) .. (10.93,3.29)   ;

\draw (117,177) node [anchor=north west][inner sep=0.75pt]    {$\mu _{0}$};
\draw (363,177) node [anchor=north west][inner sep=0.75pt]    {$\mu _{1}$};
\draw (104,132) node [anchor=north west][inner sep=0.75pt]    {$\pheat_{\tau}\mu _{0}$};
\draw (242,138) node [anchor=north west][inner sep=0.75pt]    {$(\mu_s(\tau))_{s\in [0,1]} $};

\draw (240,180) node [anchor=north west][inner sep=0.75pt]    {$(\mu_s)_{s\in [0,1]} $};
\end{tikzpicture}
\end{center}
\caption{Original geodesic $(\mu_s)_{s\in [0,1]}$ between $\mu_0$ and $\mu_1$ and the perturbed geodesic $(\mu_s(\tau))_{s\in [0,1]} $  between $\pheat_{\tau}\mu_0$ and $\mu_1$.}
\end{figure}
By Theorem \ref{thm:main} and Lemma \ref{lem:matrix_dis_equiv}, for any unit vector $w \in T_x\man$, the function 
\begin{equation}
\label{eq:c_def}
c_{w}(s) := e^{- \met_x(w, \ET_s^{\pheat_{\tau}\mu_0\to\mu_1} w)}
\end{equation}
is a concave function of $s$. Thus, 
\[
\frac{\d}{\d s}c_{w}(s)\bigg|_{s=0} \geq c_{w}(1) - c_{w}(0),
\]
which translates to
\begin{equation}
\label{eq:tangent}
- c_{w}(0)  \met_x( w , \dot{\ET}_0^{\pheat_{\tau}\mu_0\to\mu_1} w ) \geq c_{w}(1) - c_{w}(0).
\end{equation}
By the definition \eqref{eq:pointwise_ent_prod_tens_lagrange}, $c_{w}(0) = 1$ so \eqref{eq:tangent} gives 
\begin{equation}
\label{eq:tangen_cort}
\met_x( w , \dot{\ET}_0^{\pheat_{\tau}\mu_0\to\mu_1}\,  w ) \leq 1 - e^{- \met_x(w, \ET_1^{\pheat_{\tau}\mu_0\to\mu_1}  w )}.
\end{equation}
Applying \eqref{eq:tangen_cort} to \eqref{eq:W_der_heat} with $w=e^i(x)$, for an orthonormal basis $\{e^i(x)\}_{i=1}^n$ of $T_x\man$, we get 
\begin{align}
\label{eq:W_der_heat_inq}
\begin{split}
\frac{1}{2}\frac{\d}{\d \tau}\wasser^{2} (\pheat_{\tau}\mu_{0}, \mu_1)\le \int_{\man}\sum_{i=1}^n\left(1 - e^{- \met_x(e^i(x), \ET_1^{\pheat_{\tau}\mu_0\to\mu_1}  e^i(x) )}\right)\d \pheat_{\tau}\mu_0(x).
\end{split}
\end{align}
The next lemma completes the proof of theorem.

\begin{lem}
\label{lem:exp_mat}
    Let $\Matrix$ be a symmetric operator on an $n$-dimensional inner-product space $(W,\langle\cdot,\cdot\rangle)$. Then, 
    \begin{equation}
 \inf \left\{ \sum_{i=1}^n \left(  1- e^{- \langle e^i, \Matrix e^i \rangle} \right):\{e^i\}_{i=1}^n\textnormal{ orthonormal basis of $W$}\right\}  = n - \tr \left( e^{-\Matrix} \right),
    \end{equation}
 where $e^{-\Matrix}$ is the matrix exponential of $\Matrix$.
\end{lem}
\begin{proof}
Given an orthonormal basis $\{e^i\}_{i=1}^n$ we have $e^{- \langle e^i, \Matrix e^i \rangle} =e^{-d_i}$ where $\{d_i\}_{i=1}^n$ are the diagonal entries of $\Matrix$ represented as a matrix with respect to $\{e^i\}_{i=1}^n$. By the Schur--Horn Theorem, the sequence $(d_{1}, \ldots, d_{n})$ is majorized by the sequence $(\lambda_{1}, \ldots , \lambda_{n})$ of eigenvalues of $\Matrix$ (which are independent of the matrix representation of $\Matrix$). The function
    \[
(x_{1} , \ldots , x_{n}) \mapsto \sum_{i=1}^n \left(  1 - e^{- x_{i}} \right),
    \]
    is concave on $\R^n$, and invariant under permutations of the coordinates, so it is Schur-concave. Thus,
    \[
\sum_{i=1}^n \left(  1 - e^{- d_{i}} \right)\ge \sum_{i=1}^n \left(  1 - e^{- \lambda_{i}} \right) = n - \tr \left( e^{-\Matrix} \right).
    \]
Choosing $\{e^i\}_{i=1}^n$ to be an eigenbasis of $\Matrix$ completes the proof.
\end{proof}
\end{proof}
We now derive Corollary \ref{cor:W_contract_intro}.

\begin{cor}
\label{cor:W_contract} 
Let $(\man, \met)$ be a smooth compact $n$-dimensional Riemannian manifold without boundary, and let $(\pheat_t)$ be the  heat semigroup on $\man$. Suppose that the sectional curvature of $\man$ is nonnegative. Then, for all $\mu_0,\mu_1\in \PR$, we have, for any $T>0$, 
\begin{equation}
\label{eq:intrinsic_wass_contract}
 \wasser^{2} (\pheat_T\mu_{0}, \pheat_T\mu_{1})\le  \wasser^{2} (\mu_{0}, \mu_{1})-8\int_0^T\int_{\man} \Tr\left[\sinh^2\left(\frac{\ET_1^{\pheat_{\tau}\mu_0\to\pheat_{\tau}\mu_1}(x)}{2}\right)\right]\d \pheat_{\tau}\mu_0(x)\d \tau.
\end{equation}
\end{cor}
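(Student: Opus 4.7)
The plan is to apply Theorem \ref{thm:int_dim_EVI} in both slots of $\wasser^{2}$, in the spirit of the derivation of \eqref{eq:dim_wass_contract_intro} from \eqref{eq:dim_EVI_intro} carried out in \cite{BolleyGentilGuillinKuwada18}, and then use an algebraic identity to combine the two resulting bounds into the $\Tr[\sinh^{2}(\cdot/2)]$ form. Setting $g(\tau):=\wasser^{2}(\pheat_{\tau}\mu_{0},\pheat_{\tau}\mu_{1})$, the standard recipe for controlling the time derivative of a two-slot functional along the heat flow---which is justified by the fact that $\pheat_{\tau}$ is the Wasserstein gradient flow of $H$, exactly as in Lemma \ref{lem:der_W_H}---bounds $\tfrac{1}{2}\tfrac{\d}{\d\tau}g(\tau)$ by the sum of two instances of Theorem \ref{thm:int_dim_EVI}, one applied with source $\pheat_{\tau}\mu_{0}$ and (instantaneously) fixed target $\pheat_{\tau}\mu_{1}$, the other with the roles reversed, yielding for almost every $\tau\in(0,T)$
\begin{align*}
\tfrac{1}{2}\tfrac{\d}{\d\tau}g(\tau)
&\le \int_{\man}\!\bigl(n-\Tr[e^{-\ET_{1}^{\pheat_{\tau}\mu_{0}\to\pheat_{\tau}\mu_{1}}(x)}]\bigr)\,\d\pheat_{\tau}\mu_{0}(x)\\
&\quad+\int_{\man}\!\bigl(n-\Tr[e^{-\ET_{1}^{\pheat_{\tau}\mu_{1}\to\pheat_{\tau}\mu_{0}}(y)}]\bigr)\,\d\pheat_{\tau}\mu_{1}(y).
\end{align*}

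To merge the two integrals, I would establish the pointwise reversal identity
\begin{equation*}
\ET_{1}^{\pheat_{\tau}\mu_{1}\to\pheat_{\tau}\mu_{0}}(\Ot(x))=-\ET_{1}^{\pheat_{\tau}\mu_{0}\to\pheat_{\tau}\mu_{1}}(x),
\end{equation*}
where $\Ot$ denotes the optimal transport map pushing $\pheat_{\tau}\mu_{0}$ onto $\pheat_{\tau}\mu_{1}$. At the level of Jacobi field matrices expressed in orthonormal frames related by parallel transport along the shared geodesic segment, this reduces to the relation $\tilde{\Jf}_{s}=\Jf_{1-s}\Jf_{1}^{-1}$, which upon differentiating in $s$ gives $\tilde{\ETP}_{s}=-\ETP_{1-s}$, and then integrating in $s$ produces the displayed identity for $\ET_{1}$. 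Combined with the change of variables $\Ot_{\sharp}\pheat_{\tau}\mu_{0}=\pheat_{\tau}\mu_{1}$ inside the second integral, the two contributions collapse into a single integral against $\pheat_{\tau}\mu_{0}$ of
\begin{equation*}
\Tr\bigl[2\Id-e^{-\ET_{1}^{\pheat_{\tau}\mu_{0}\to\pheat_{\tau}\mu_{1}}(x)}-e^{\ET_{1}^{\pheat_{\tau}\mu_{0}\to\pheat_{\tau}\mu_{1}}(x)}\bigr].
\end{equation*}

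The final step is the spectral identity $2\Id-e^{A}-e^{-A}=-4\sinh^{2}(A/2)$, valid for any symmetric matrix $A$, which converts the combined bound into
\begin{equation*}
\tfrac{\d}{\d\tau}g(\tau)\le -8\int_{\man}\Tr\!\biggl[\sinh^{2}\!\biggl(\tfrac{\ET_{1}^{\pheat_{\tau}\mu_{0}\to\pheat_{\tau}\mu_{1}}(x)}{2}\biggr)\biggr]\,\d\pheat_{\tau}\mu_{0}(x),
\end{equation*}
and integrating in $\tau\in(0,T)$ yields exactly \eqref{eq:intrinsic_wass_contract}. I expect the main obstacle to be the careful justification of the reversal identity for the entropy tensor, together with the upgrade from the one-sided super-derivative bound in Theorem \ref{thm:int_dim_EVI} to a genuine derivative of $g$ when both slots evolve simultaneously (to be handled along the lines of Lemma \ref{lem:der_W_H}, via convexity and Fatou-type arguments); once these are in place, the rest is purely algebraic. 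Crucially, the tensorial $\sinh^{2}$ expression emerges directly from the spectral identity \emph{without} the Jensen step that would collapse it to $\sinh^{2}$ of a scalar and recover only \eqref{eq:dim_wass_contract_intro}---this is precisely where the intrinsic dimensional gain comes from.
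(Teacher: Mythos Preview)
Your proposal is correct and follows essentially the same route as the paper: two applications of Theorem~\ref{thm:int_dim_EVI} (one in each slot), the time-reversal identity $\ET_{1}^{\nu_{1}\to\nu_{0}}(\Ot(x))=-\ET_{1}^{\nu_{0}\to\nu_{1}}(x)$ proved via $\tilde{\Jf}_{s}=\Jf_{1-s}\Jf_{1}^{-1}$ (this is exactly the content of Lemma~\ref{lem:ent_tens_tim_rever} and Claim~\ref{cl: jacobi_time_reversal}), the spectral identity $2\Id-e^{A}-e^{-A}=-4\sinh^{2}(A/2)$, and integration in $\tau$. The only cosmetic difference is that the paper makes the two-slot derivative explicit by freezing one argument at $\pheat_{s}$, applying the one-sided EVI, and then invoking the chain rule at $s=\tau$, whereas you describe this step in words; the substance is identical.
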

\begin{rem}
In light of the additional integration step mentioned in Remark \ref{rem:matrix_disp_cvx_euclid_def} and  Remark \ref{rem:better}, Theorem \ref{thm:int_dim_EVI} and Corollary \ref{cor:W_contract}  improve (up to the issue of compactness of $\man$, see  Remark \ref{rem:remove_compact}) on \cite[Theorem 5.12 and Theorem 5.13] {Shenfeld24}, respectively, when restricting to optimal transport flows. 
\end{rem}
To prove Corollary \ref{cor:W_contract} we will need the following result on the behavior of  entropy tensors under time-reversal.
\begin{lem}
\label{lem:ent_tens_tim_rever}
Fix $\nu_0,\nu_1\in \PR$ and let $\exp(\grad\ot^{\nu_0\to \nu_1})$ be the optimal transport map from $\nu_0$ to $\nu_1$. Denote
\begin{equation}
\label{eq:Omega}
\tilde{D}(\nu_0,\nu_1):=\{x\in\dom(\nu_0,\nu_1):\exp_x(\grad\ot^{\nu_0\to \nu_1})(x)\in \dom(\nu_1,\nu_0)\}.
\end{equation}
Then, up to a choice of an orthonormal basis, we have
\begin{equation}
\label{eq:ent_tens_tim_rever}
\ET_1^{\nu_1\to\nu_0}(\exp_x(\grad\ot^{\nu_0\to \nu_1}(x)))= -\ET_1^{\nu_0\to\nu_1}(x)\qquad\textnormal{ for almost all } \qquad x\in \tilde{D}(\nu_0,\nu_1).
\end{equation}
\end{lem}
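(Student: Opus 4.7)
The plan is to reduce the identity to a direct chain-rule computation between Jacobi matrices of the forward and reverse transport maps, exploiting the fact that the optimal transport geodesic from $\nu_1$ to $\nu_0$ is simply the time-reversal of the one from $\nu_0$ to $\nu_1$.

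\emph{Geometric setup.} Fix $x\in \tilde D(\nu_0,\nu_1)$ and set $y:=\exp_x(\grad\ot^{\nu_0\to\nu_1}(x))$. By the Brenier--McCann theorem applied to the reverse problem, the inverse of the forward transport $\exp(\grad\ot^{\nu_0\to\nu_1})$ coincides $\nu_1$-a.e.\ with $\exp(\grad\ot^{\nu_1\to\nu_0})$; in particular $\exp_y(\grad\ot^{\nu_1\to\nu_0}(y))=x$, and the geodesics $\geo(s):=\exp_x(s\grad\ot^{\nu_0\to\nu_1}(x))$ and $\tilde\geo(s):=\exp_y(s\grad\ot^{\nu_1\to\nu_0}(y))$ trace the same curve in opposite directions, so $\tilde\geo(s)=\geo(1-s)$. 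Writing $\Ot_s(x):=\exp_x(s\grad\ot^{\nu_0\to\nu_1}(x))$ and $\tilde\Ot_s(y):=\exp_y(s\grad\ot^{\nu_1\to\nu_0}(y))$, this gives the pointwise identity $\tilde\Ot_{1-s}(y)=\Ot_s(x)$ for all $s\in[0,1]$.

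\emph{Chain rule for Jacobi matrices.} Choose an orthonormal frame along $\geo$ by parallel-transporting a basis from $x$, and represent $\tilde\Jf_s(y)$ in the frame along $\tilde\geo$ obtained by parallel-transporting this same basis from $y$; by reversibility of parallel transport, the frames at the common point $\geo(1-s)=\tilde\geo(s)$ agree. Then $\Jf_s(x)$ and $\tilde\Jf_s(y)$ represent the differentials of $\Ot_s$ and $\tilde\Ot_s$ in the sense of \cite[Proposition 2.1]{MR2295207}, and differentiating the relation $\Ot_s=\tilde\Ot_{1-s}\circ\Ot_1$ at $x$ yields
\[
\Jf_s(x)=\tilde\Jf_{1-s}(y)\cdot \Jf_1(x),\qquad s\in[0,1].
\]
Replacing $s$ by $1-s$ gives $\tilde\Jf_s(y)=\Jf_{1-s}(x)\Jf_1(x)^{-1}$.

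\emph{From Jacobi matrices to entropy tensors.} Differentiating in $s$ yields $\dot{\tilde\Jf}_s(y)=-\dot\Jf_{1-s}(x)\Jf_1(x)^{-1}$, so by \eqref{eq:U_def},
\[
\tilde\ETP_s(y)=\dot{\tilde\Jf}_s(y)\,\tilde\Jf_s(y)^{-1}=-\dot\Jf_{1-s}(x)\Jf_{1-s}(x)^{-1}=-\ETP_{1-s}(x).
\]
Integrating via \eqref{eq:pointwise_ent_prod_tens_lagrange} and changing variables,
\[
\ET_1^{\nu_1\to\nu_0}(y)=-\int_0^1 \tilde\ETP_s(y)\,\d s=\int_0^1 \ETP_{1-s}(x)\,\d s=-\ET_1^{\nu_0\to\nu_1}(x),
\]
which is the claimed identity.

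The main obstacle is rigorously justifying the composition $\Ot_s=\tilde\Ot_{1-s}\circ\Ot_1$ and its differentiation: the transport maps $\Ot_1$ and $\tilde\Ot_1$ are only almost-everywhere differentiable as Hessians of $c$-concave functions, so the chain rule must be invoked at a point where both Hessians exist and both Jacobi-matrix interpretations apply. This is precisely what restriction to $\tilde D(\nu_0,\nu_1)$ delivers; combined with invertibility of $\Jf_1(x)$ and absence of conjugate points along the optimal transport geodesic (standard in the Riemannian Brenier--McCann theory), it legitimizes the entire computation.
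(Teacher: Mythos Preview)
Your argument is correct and follows essentially the same route as the paper: both establish the Jacobi-matrix time-reversal identity $\tilde\Jf_s(y)=\Jf_{1-s}(x)\Jf_1(x)^{-1}$ (with frames related by parallel transport), deduce $\tilde\ETP_s(y)=-\ETP_{1-s}(x)$, and integrate. The only cosmetic difference is that the paper verifies the Jacobi relation by checking the second-order ODE with boundary data coming from the inverse-function identity \cite[Claim 4.3]{Cordero-ErasquinMcCannSchmuckenschlager01}, whereas you obtain it directly via the chain rule for the composition $\Ot_s=\tilde\Ot_{1-s}\circ\Ot_1$; the underlying technical input (pointwise second-order differentiability of both potentials on $\tilde D(\nu_0,\nu_1)$ and the weak-differential interpretation of $\Jf_s$) is the same.
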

We prove Lemma \ref{lem:ent_tens_tim_rever} below, but first let us show how to complete the proof of Corollary \ref{cor:W_contract}, using the usual strategy (e.g., \cite[Proof of Corollary 13]{MR3952157}) of integrating in time the intrinsic dimensional evolution inequality \eqref{eq:int_dim_EVI}. Let us also note that the set $\tilde{D}(\nu_0,\nu_1)$
has full measure  \cite[Claim 4.4]{Cordero-ErasquinMcCannSchmuckenschlager01}.

\begin{proof}[Proof of Corollary \ref{cor:W_contract}]
Applying Theorem \ref{thm:int_dim_EVI} to the measures $(\mu_1,\mu_0)$ we get 
\begin{align*}
\frac{\d}{\d \tau}\wasser^{2} (\pheat_{\tau}\mu_{1}, \mu_{0})& \leq 2\int_{\man}\left(n - \Tr[e^{-\ET_1^{\pheat_{\tau}\mu_1\to\mu_0}(x)}]\right)\d \pheat_{\tau}\mu_1(x)\\
&\overset{\eqref{eq:ent_tens_tim_rever}}{=}2\int_{\man}\left(n - \Tr[e^{\ET_1^{\mu_0\to \pheat_{\tau}\mu_1}(\exp_x(\grad\ot^{\pheat_{\tau}\mu_1\to\mu_0}(x)))}]\right)\d \pheat_{\tau}\mu_1(x)\\
&=2\int_{\man}\left(n - \Tr[e^{\ET_1^{\mu_0\to \pheat_{\tau}\mu_1}(x)}]\right)\d \mu_0(x),
\end{align*}
where $\exp(\grad\ot^{\pheat_{\tau}\mu_1\to\mu_0})$ is the optimal transport map from $\pheat_{\tau}\mu_1$ to $\mu_0$. Hence,
\begin{equation}
\label{eq:int_dim_EVI_opposite}
\frac{\d}{\d \tau}\wasser^{2} (\mu_{0}, \pheat_{\tau}\mu_{1}) \leq 2\int_{\man}\left(n - \Tr[e^{\ET_1^{\mu_0\to\pheat_{\tau}\mu_1}(x)}]\right)\d \mu_0(x).
\end{equation}
Fix $s\ge 0$, and apply \eqref{eq:int_dim_EVI_opposite}, with $\pheat_s\mu_{0}$ replacing $\mu_0$, to get
\begin{equation}
\label{eq:s_contract}
\frac{\d}{\d \tau} \wasser^{2} (\pheat_s\mu_{0}, \pheat_{\tau}\mu_{1}) \leq 2\int_{\man}\left(n - \Tr[e^{\ET_1^{\pheat_s\mu_0\to\pheat_{\tau}\mu_1}(x)}]\right)\d \pheat_s\mu_0(x).
\end{equation}
Switching between $\mu_0$ and $\mu_1$ in \eqref{eq:s_contract}, and using the symmetry of $\wasser$, we get
\begin{equation}
\label{eq:s_contract_switch}
\frac{\d}{\d \tau} \wasser^{2} (\pheat_{\tau}\mu_0, \pheat_s\mu_1) \leq 2\int_{\man}\left(n - \Tr[e^{\ET_1^{\pheat_s\mu_1\to\pheat_{\tau}\mu_0}(x)}]\right)\d \pheat_s\mu_1(x).
\end{equation}
Summing up \eqref{eq:s_contract} and \eqref{eq:s_contract_switch}, we get
\begin{equation}
\label{eq:s_sum}
\begin{split}
&\frac{\d}{\d \tau} \wasser^{2} (\pheat_s\mu_{0}, \pheat_{\tau}\mu_{1}) +\frac{\d}{\d \tau} \wasser^{2} (\pheat_{\tau}\mu_0, \pheat_s\mu_1) \\
&\le 2\int_{\man}\left(n - \Tr[e^{\ET_1^{\pheat_s\mu_0\to\pheat_{\tau}\mu_1}(x)}]\right)\d \pheat_s\mu_0(x)+2\int_{\man}\left(n - \Tr[e^{\ET_1^{\pheat_s\mu_1\to\pheat_{\tau}\mu_0}(x)}]\right)\d \pheat_s\mu_1(x).
\end{split}
\end{equation}
In particular, taking $s=\tau$, the chain rule\footnote{For $\ell:[0,\infty)\times [0,\infty)\to \R$ we have $\frac{\d}{\d \tau} \ell(\tau,\tau)=\frac{\d}{\d \tau} \ell(\tau,s)\big|_{s=\tau}+\frac{\d}{\d \tau} \ell(s,\tau)\big|_{s=\tau}$.} yields
\begin{equation}
\label{eq:s_tau}
\begin{split}
&\frac{\d}{\d \tau}\wasser^{2} (\pheat_{\tau}\mu_{0}, \pheat_{\tau}\mu_{1})  \\
&\le 2\int_{\man}\left(n - \Tr[e^{\ET_1^{\pheat_{\tau}\mu_0\to\pheat_{\tau}\mu_1}(x)}]\right)\d \pheat_{\tau}\mu_0(x)+2\int_{\man}\left(n - \Tr[e^{\ET_1^{\pheat_{\tau}\mu_1\to\pheat_{\tau}\mu_0}(x)}]\right)\d \pheat_{\tau}\mu_1(x)\\
&\overset{\eqref{eq:ent_tens_tim_rever}}{=}2\int_{\man}\left(n - \Tr[e^{\ET_1^{\pheat_{\tau}\mu_0\to\pheat_{\tau}\mu_1}(x)}]\right)\d \pheat_{\tau}\mu_0(x)\\
&+2\int_{\man}\left(n - \Tr[e^{-\ET_1^{\pheat_{\tau}\mu_0\to\pheat_{\tau}\mu_1}(\exp_x(\grad\ot^{\pheat_{\tau}\mu_1\to \pheat_{\tau}\mu_0}(x)))}]\right)\d \pheat_{\tau}\mu_1(x)\\
&=2\int_{\man}\left(n - \Tr[e^{\ET_1^{\pheat_{\tau}\mu_0\to\pheat_{\tau}\mu_1}(x)}]\right)\d \pheat_{\tau}\mu_0(x)+2\int_{\man}\left(n - \Tr[e^{-\ET_1^{\pheat_{\tau}\mu_0\to\pheat_{\tau}\mu_1}(x)}]\right)\d \pheat_{\tau}\mu_0(x),
\end{split}
\end{equation}
where $\exp(\grad\ot^{\pheat_{\tau}\mu_1\to \pheat_{\tau}\mu_0})$ is the optimal transport map from $\pheat_{\tau}\mu_1$ to $\pheat_{\tau}\mu_0$. To conclude,
\begin{equation}
\label{eq:s_tau_invert}
\begin{split}
&\frac{\d}{\d \tau} \wasser^{2} (\pheat_{\tau}\mu_{0}, \pheat_{\tau}\mu_{1}) \le 2\int_{\man}\left(2n - \Tr[e^{-\ET_1^{\pheat_{\tau}\mu_0\to\pheat_{\tau}\mu_1}(x)}]-\Tr[e^{\ET_1^{\pheat_{\tau}\mu_0\to\pheat_{\tau}\mu_1}(x)}]\right)\d \pheat_{\tau}\mu_0(x).
\end{split}
\end{equation}
Using the identities $e^x+e^{-x}=2\cosh(x)$ and $2\sinh^2(x)=\cosh(2x)-1$, we have, for any symmetric matrix $A$, 
\[
2n-\Tr[e^{-A}]-\Tr[e^{A}]=-4\Tr\left[\sinh^2\left(\frac{A}{2}\right)\right].
\]
Hence, integrating \eqref{eq:s_tau_invert} from 0 to $T$, we conclude that 
\begin{equation}
\label{eq:s_tau_integrate}
\begin{split}
&\wasser^{2} (\pheat_T\mu_{0}, \pheat_T\mu_{1}) -\wasser^{2} (\mu_{0}, \mu_{1}) \le-8\int_0^T\int_{\man} \Tr\left[\sinh^2\left(\frac{\ET_1^{\pheat_{\tau}\mu_0\to\pheat_{\tau}\mu_1}(x)}{2}\right)\right]\d \pheat_{\tau}\mu_0(x)\d \tau.
\end{split}
\end{equation}
\end{proof}

\begin{proof}[Proof of Lemma \ref{lem:ent_tens_tim_rever}]
In order to understand how entropy tensors behave under time-reversal we need to understand the behavior of Jacobi fields under time-reversal. To this end it will be useful to recall the construction of Jacobi fields from Section \ref{subsub:geodesics}. 

For $x\in \dom(\nu_0,\nu_1)$ define the Jacobi fields $(\Jf_s^{\nu_0\to \nu_1}(x))_{s\in [0,1]}$ associated to $\exp(\grad\ot^{\nu_0\to \nu_1})$ by fixing an orthonormal basis $e_0^{\nu_0\to\nu_1}(x)=\{(e_0^{\nu_0\to\nu_1})^i(x)\}_{i=1}^n$ of $T_x\man$, with $(e^{\nu_0\to\nu_1}_0)^1(x)\propto \frac{\d}{\d s}\Ot_s^{\nu_0\to\nu_1} |_{s=0}(x)$, as the unique solution of
\begin{equation}
\label{eq:jacobi_forward}
\ddot{\Jf}_s^{\nu_0\to \nu_1}(x)+\Rs_s^{\nu_0\to \nu_1}(x)\Jf_s^{\nu_0\to \nu_1}(x)=0,\quad \quad\Jf_0^{\nu_0\to \nu_1}(x)=\Id_{\dd},\quad \dot{\Jf}_0^{\nu_0\to \nu_1}(x)=\hess_x\ot^{\nu_0\to \nu_1},
\end{equation}
where $\hess_x\ot^{\nu_0\to \nu_1}$ is represented in the basis $e_0^{\nu_0\to \nu_1}(x)$, and $(\Rs_s^{\nu_1\to\nu_0}(x))_{s\in [0,1]}$ is defined by
\begin{equation}
\label{eq:Rs_forward}
\begin{split}
&(\Rs_s^{\nu_0\to\nu_1})^{ij}(x):=\\
&\met_{\Ot_s^{\nu_0\to\nu_1}(x)}\left(\riem_{\Ot_s^{\nu_0\to\nu_1}(x)}\left(\frac{\d}{\d s}\Ot_s^{\nu_0\to\nu_1}(x),(e^{\nu_0\to\nu_1}_s)^i(x)\right)\frac{\d}{\d s}\Ot_s^{\nu_0\to\nu_1}(x),(e^{\nu_0\to\nu_1}_s)^j(x)\right), 
\end{split}
\end{equation}
where $e^{\nu_0\to\nu_1}_s(x)$ is the parallel transport of $e^{\nu_0\to\nu_1}_0(x)$ along the geodesic $(\Ot_t^{\nu_0\to \nu_1}(x))_{t\in [0,1]}$,
\[
\Ot_t^{\nu_0\to \nu_1}(x):=\exp_{x}(t\nabla\ot^{\nu_0\to \nu_1}(x)),\qquad t\in [0,1].
\]
We will now define the analogous construction for the optimal transport map $\exp(\grad\ot^{\nu_1\to \nu_0})$ from $\nu_1$ to $\nu_0$. To this end we note that letting, for $y\in \dom(\nu_1,\nu_0)$,
\[
\Ot_t^{\nu_1\to \nu_0}(y):=\exp_{y}(t\nabla\ot^{\nu_1\to \nu_0}(y)),\qquad t\in [0,1],
\]
we have the relation, for almost all\footnote{$\Ot_1^{\nu_1\to \nu_0}$ is the inverse of $\Ot_1^{\nu_0\to \nu_1}$ almost-everywhere  \cite[Page 243]{Cordero-ErasquinMcCannSchmuckenschlager01}.} $x\in \dom(\nu_0,
\nu_1)$, and $t\in [0,1]$,
\begin{equation}
\label{eq:transport_inv_t}
\Ot_t^{\nu_0\to \nu_1}(x) = \Ot_{1-t}^{\nu_1\to \nu_0}(\Ot_1^{\nu_0\to\nu_1}(x)),
\end{equation}
where we used that $[\Ot_1^{\nu_0\to \nu_1}]^{-1}=\Ot_1^{\nu_1\to \nu_0}$  \cite[Page 243]{Cordero-ErasquinMcCannSchmuckenschlager01}. In particular, differentiating \eqref{eq:transport_inv_t} in $t$ gives 
\begin{equation}
\label{eq:transport_inv_t_der}
\frac{\d}{\d t}\Ot_t^{\nu_0\to \nu_1}(x) = -\frac{\d}{\d t}\Ot_{1-t}^{\nu_1\to \nu_0}(\Ot_1^{\nu_0\to\nu_1}(x)).
\end{equation}
To define the Jacobi fields  $(\Jf_r^{\nu_1\to \nu_0}(y))_{r\in [0,1]}$, for  $y=\Ot_1^{\nu_0\to \nu_1}(x)$, we take the orthonormal basis at $T_y\man$ to be $e^{\nu_1\to\nu_0}_0(y):=-e^{\nu_0\to\nu_1}_1(x)$. Note that, as required, 
\begin{equation}
\label{eq:reverse_frame}
(e_0^{\nu_1\to\nu_0})^1(y)=-(e_1^{\nu_0\to\nu_1})^1(x)\propto -\frac{\d}{\d s}\Ot_s^{\nu_0\to\nu_1} \bigg|_{s=1}(x)\overset{\eqref{eq:transport_inv_t_der}}{=}\frac{\d}{\d s}\Ot_s^{\nu_1\to\nu_0} \bigg|_{s=0}(y).
\end{equation}
Further, denoting $e^{\nu_1\to\nu_0}_t(y)$ the parallel transport of $e^{\nu_1\to\nu_0}_0(y)$ along the geodesic $(\Ot_t^{\nu_1\to\nu_0}(y))_{t\in [0,1]}$, we have, by \eqref{eq:transport_inv_t}, 
\begin{equation}
\label{eq:reverse_frame_geo}
e^{\nu_0\to\nu_1}_t(x) =- e^{\nu_1\to\nu_0}_{1-t}(y).
\end{equation}
The Jacobi fields  $(\Jf_r^{\nu_1\to \nu_0}(y))_{r\in [0,1]}$ are the unique solution of
\begin{equation}
\label{eq:jacobi_backward}
\ddot{\Jf}_r^{\nu_1\to \nu_0}(y)+\Rs_r^{\nu_1\to \nu_0}(y)\Jf_r^{\nu_1\to \nu_0}(y)=0,\quad \quad\Jf_0^{\nu_1\to \nu_0}(y)=\Id_{\dd},\quad \dot{\Jf}_0^{\nu_1\to \nu_0}(y)=\hess_y\ot^{\nu_1\to \nu_0},
\end{equation}
where $\hess_y\ot^{\nu_0\to \nu_1}$ is represented in the basis $e_0^{\nu_1\to \nu_0}(y)$, and $(\Rs_r^{\nu_1\to\nu_0}(y))_{r\in [0,1]}$ is given by 
\begin{equation}
\label{eq:Rs_backward}
\begin{split}
&(\Rs_r^{\nu_1\to\nu_0})^{ij}(y):=\\
&\met_{\Ot_r^{\nu_1\to\nu_0}(y)}\left(\riem_{\Ot_r^{\nu_1\to\nu_0}(y)}\left(\frac{\d}{\d r}\Ot_r^{\nu_1\to\nu_0}(y),(e^{\nu_1\to\nu_0}_r)^i(y)\right)\frac{\d}{\d r}\Ot_r^{\nu_1\to\nu_0}(y),(e^{\nu_1\to\nu_0}_r)^j(y)\right)\\
&=\met_{\Ot_{1-r}^{\nu_0\to\nu_1}(x)}\left(\riem_{\Ot_{1-r}^{\nu_0\to\nu_1}(x)}\left(-\frac{\d}{\d r}\Ot_{1-r}^{\nu_0\to\nu_1}(x),-(e^{\nu_0\to\nu_1}_{1-r})^i(x)\right)-\frac{\d}{\d r}\Ot_{1-r}^{\nu_0\to\nu_1}(x),-(e^{\nu_0\to\nu_1}_{1-r})^j(x)\right)\\
&=(\Rs_{1-r}^{\nu_0\to\nu_1})^{ij}(x),
\end{split}
\end{equation}
where we used \eqref{eq:transport_inv_t}, \eqref{eq:transport_inv_t_der}, \eqref{eq:reverse_frame_geo} with $t=1-r$. In order to relate the two Jacobi fields we use the fact \cite[Page 622]{MR2295207} that
\begin{equation}
\label{eq:jacobi_der_transport} 
\begin{split}
&\Jf_s^{\nu_0\to \nu_1}(x)=\d \Ot_s^{\nu_0\to \nu_1}(x),\qquad\textnormal{for all}\qquad x\in \dom(\nu_0,\nu_1)\\
&\Jf_r^{\nu_1\to \nu_0}(y)=\d \Ot_r^{\nu_1\to \nu_0}(y),\qquad\textnormal{for all}\qquad y\in \dom(\nu_1,\nu_0)
\end{split},
\end{equation}
where $\d \Ot_s^{\nu_0\to \nu_1}(x), \d \Ot_r^{\nu_1\to \nu_0}(y)$ are to be understood in a weak sense as in \cite[Page 622]{MR2295207}, and are represented in the repsective bases $e_s^{\nu_0\to\nu_1}(x), e_r^{\nu_1\to\nu_0}(y)$. By \cite[Claim 4.3]{Cordero-ErasquinMcCannSchmuckenschlager01}, for any $x\in \dom(\nu_0,\nu_1)$ satisfying $\Ot_1^{\nu_0\to \nu_1}(x)\in \dom(\nu_1,\nu_0)$, we have 
\begin{equation}
\label{eq:inv_func_thm}   
\d \left([\Ot_1^{\nu_0\to \nu_1}]^{-1}\right)(\Ot_1^{\nu_0\to \nu_1}(x)) =[\d\Ot_1^{\nu_0\to \nu_1}(x)]^{-1},
\end{equation}
and hence, by \eqref{eq:jacobi_der_transport},
\begin{equation}
\label{eq:inv_func_thm_jacobi}   
\Jf_1^{\nu_1\to \nu_0}(\Ot_1^{\nu_0\to \nu_1}(x)) =[\Jf_1^{\nu_0\to \nu_1}(x)]^{-1}.
\end{equation}
The following claim uses \eqref{eq:inv_func_thm_jacobi}  to establish the relation between the two Jacobi fields.
\begin{cl}
\label{cl: jacobi_time_reversal}
For almost-all $x\in \dom(\nu_0,\nu_1)$, we have, for all $r\in [0,1]$,
\begin{equation}
\label{eq:jacobi_time_reversal}
\Jf_r^{\nu_1\to \nu_0}(\Ot_1^{\nu_0\to \nu_1}(x))=\Jf_{1-r}^{\nu_0\to \nu_1}(x)\,\Jf_1^{\nu_1\to \nu_0}(\Ot_1^{\nu_0\to \nu_1}(x)).
\end{equation}
\end{cl}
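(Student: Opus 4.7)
The plan is to show that both sides of the asserted identity are matrix-valued solutions of the \emph{same} Jacobi equation along the reverse geodesic $r\mapsto \Ot_r^{\nu_1\to \nu_0}(y)$, with $y:=\Ot_1^{\nu_0\to\nu_1}(x)$, and to pin them down by matching values at the two endpoints $r=0$ and $r=1$. Setting $K_r := \Jf_r^{\nu_1\to\nu_0}(y)$ and $L_r := \Jf_{1-r}^{\nu_0\to\nu_1}(x)\,\Jf_1^{\nu_1\to\nu_0}(y)$, I first verify that $L$ satisfies the Jacobi ODE: differentiating twice in $r$ gives $\ddot L_r = \ddot\Jf_{1-r}^{\nu_0\to\nu_1}(x)\,\Jf_1^{\nu_1\to\nu_0}(y)$, and inserting \eqref{eq:jacobi_forward} together with the identity $\Rs_r^{\nu_1\to\nu_0}(y)=\Rs_{1-r}^{\nu_0\to\nu_1}(x)$ established in \eqref{eq:Rs_backward} yields $\ddot L_r + \Rs_r^{\nu_1\to\nu_0}(y)\,L_r = 0$, which is exactly the equation satisfied by $K_r$ in \eqref{eq:jacobi_backward}.

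Next I check that $K$ and $L$ agree at both boundary points. At $r=0$ the initial condition in \eqref{eq:jacobi_backward} gives $K_0=\Id$, while $L_0 = \Jf_1^{\nu_0\to\nu_1}(x)\,\Jf_1^{\nu_1\to\nu_0}(y) = \Id$ by the inverse-function relation \eqref{eq:inv_func_thm_jacobi}. At $r=1$ we have $K_1 = \Jf_1^{\nu_1\to\nu_0}(y)$, and $L_1 = \Jf_0^{\nu_0\to\nu_1}(x)\,\Jf_1^{\nu_1\to\nu_0}(y) = \Id\cdot\Jf_1^{\nu_1\to\nu_0}(y) = K_1$. Hence the matrix Jacobi field $M_r := K_r - L_r$ satisfies $M_0 = M_1 = 0$.

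To conclude that $M\equiv 0$, I read each column of $M$ as a vector Jacobi field along the minimizing geodesic $r\mapsto \Ot_r^{\nu_1\to\nu_0}(y)$ that vanishes at both endpoints. By \cite[Theorem 4.2]{Cordero-ErasquinMcCannSchmuckenschlager01}, for almost every $x\in\dom(\nu_0,\nu_1)$ the point $y$ does not lie in the cut locus of $x$, so this geodesic carries no conjugate points in $(0,1]$; equivalently, the base-point Jacobi matrix (the solution with vanishing initial value and identity initial velocity) remains invertible at $r=1$. A standard uniqueness argument then forces every Jacobi field vanishing at $r=0$ and $r=1$ to be identically zero, giving $K_r = L_r$ for all $r\in[0,1]$ on the full-measure set $\tilde D(\nu_0,\nu_1)$ from \eqref{eq:Omega}.

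The main obstacle is the careful bookkeeping of orthonormal frames: the matrices $\Jf_{1-r}^{\nu_0\to\nu_1}(x)$ and $\Jf_r^{\nu_1\to\nu_0}(y)$ are expressed in parallel-transported bases related by the sign flip \eqref{eq:reverse_frame_geo}, and one must check that the product $\Jf_{1-r}^{\nu_0\to\nu_1}(x)\,\Jf_1^{\nu_1\to\nu_0}(y)$ as literal matrix multiplication corresponds to the correct composition of weak differentials in the sense of \eqref{eq:jacobi_der_transport}. Fortunately the sign flip cancels in both places it enters---both slots of the velocity inside the Riemann tensor in \eqref{eq:Rs_backward}, and both endpoints of the composition map in \eqref{eq:inv_func_thm_jacobi}---which is precisely what is accommodated by the ``up to a choice of an orthonormal basis'' qualifier in Lemma \ref{lem:ent_tens_tim_rever}. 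As a more conceptual alternative, one could simply differentiate the functional identity \eqref{eq:transport_inv_t} at $y$ and invoke \eqref{eq:jacobi_der_transport} to obtain the claim in one line, but doing so requires working fully within the weak-differential framework of \cite{MR2295207} rather than staying inside the Jacobi-ODE formalism.
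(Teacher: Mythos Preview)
Your proof is correct and follows essentially the same route as the paper: define the candidate $L_r=\Jf_{1-r}^{\nu_0\to\nu_1}(x)\,\Jf_1^{\nu_1\to\nu_0}(y)$, verify it solves the reversed Jacobi equation via \eqref{eq:jacobi_forward} and \eqref{eq:Rs_backward}, match the two boundary values using \eqref{eq:inv_func_thm_jacobi} and $\Jf_0^{\nu_0\to\nu_1}(x)=\Id_n$, and conclude by uniqueness for the two-point boundary problem in the absence of conjugate points. Your added paragraph on frame bookkeeping and the alternative via differentiating \eqref{eq:transport_inv_t} is sound commentary but not needed for the argument.
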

\begin{proof}
Given $x\in \dom(\nu_0,\nu_1)$ let $y:=\Ot^{\nu_0\to \nu_1}(x)$, where we note that $x$ and $y$ are not conjugate points, and for $r\in [0,1]$ let
\[
\tilde \Jf_r: =\Jf_{1-r}^{\nu_0\to \nu_1}(x)\,\Jf_1^{\nu_1\to \nu_0}(y).
\]
To show that $\tilde \Jf_r=\Jf_r^{\nu_1\to \nu_0}(y)$ it suffices to show that $(\tilde \Jf_r)_{r\in [0,1]}$ satisfies \eqref{eq:jacobi_backward}, as well as the boundary conditions $\tilde{\Jf}_0=\Jf_0^{\nu_1\to \nu_0}(y)=\Id_n$ and $\tilde{\Jf}_1=\Jf_1^{\nu_1\to \nu_0}(y)$. To verify \eqref{eq:jacobi_backward} we compute
\begin{align*}
\frac{\d^2}{\d r^2}\tilde{\Jf}_r=\ddot{\Jf}_{1-r}^{\nu_0\to \nu_1}(x)\Jf_1^{\nu_1\to \nu_0}(y)\overset{
\eqref{eq:jacobi_forward}}{=}-\Rs_{1-r}^{\nu_0\to \nu_1}(x)\Jf_{1-r}^{\nu_0\to \nu_1}(y)\Jf_1^{\nu_1\to \nu_0}(y)\overset{\eqref{eq:Rs_backward}}{=}-\Rs_{r}^{\nu_1\to \nu_0}(y)\tilde{\Jf}_r(y).
\end{align*}
For the boundary condition at $r=0$ note that 
\[
\tilde \Jf_0 =\Jf_{1}^{\nu_0\to \nu_1}(x)\,\Jf_1^{\nu_1\to \nu_0}(y)\overset{\eqref{eq:inv_func_thm_jacobi} }{=}\Id_n,
\]
while for the boundary condition at $r=1$ we have 
\[
\tilde \Jf_1: =\Jf_0^{\nu_0\to \nu_1}(x)\,\Jf_1^{\nu_1\to \nu_0}(y)=\Jf_1^{\nu_1\to \nu_0}(y),
\]
where we used $\Jf_0^{\nu_0\to \nu_1}(x)=\Id_n$.
\end{proof}
With Claim \ref{cl: jacobi_time_reversal} in hand we can now compute, setting $y:=\Ot_1^{\nu_0\to \nu_1}(x)$ for a fixed $x\in \dom(\nu_0,\nu_1)$,
\begin{equation}
\label{eq:U_reverse}
\begin{split}
&\ETP_r^{\nu_1\to\nu_0}(y)=\dot{\Jf}_r^{\nu_1\to\nu_0}(y)[\Jf_r^{\nu_1\to\nu_0}]^{-1}(y)\\
&=-\dot{\Jf}_{1-r}^{\nu_0\to \nu_1}(x)\,\Jf_1^{\nu_1\to \nu_0}(y)[\Jf_{1-r}^{\nu_0\to \nu_1}(x)\,\Jf_1^{\nu_1\to \nu_0}(y)]^{-1}=-\dot{\Jf}_{1-r}^{\nu_0\to \nu_1}(x)[\Jf_{1-r}^{\nu_0\to \nu_1}(x)]^{-1}\\
&=-\ETP_{1-r}^{\nu_0\to\nu_1}(x).
\end{split}
\end{equation}
Integrating \eqref{eq:U_reverse} in time we get 
\begin{align}
\label{eq:H_time_reverse}
\ET_t^{\nu_1\to\nu_0}(y)&=-\int_0^t \ETP_r^{\nu_1\to\nu_0}(y)\d r=\int_0^t \ETP_{1-r}^{\nu_0\to\nu_1}(x)\d r=\int_{1-t }^1\ETP_r^{\nu_0\to\nu_1}(x)\d r,
\end{align}
and taking $t=1$ in \eqref{eq:H_time_reverse} proves \eqref{eq:ent_tens_tim_rever}.
\end{proof}

\bibliographystyle{amsplain}
\bibliography{ref}
\end{document}